\documentclass[journal, twoside]{IEEEtran}

\ifCLASSINFOpdf
\else
\fi

\usepackage{graphicx}
\usepackage{vector, array, amssymb, amscd, amsthm}
\usepackage[cmex10]{amsmath}
\usepackage{algorithmic}
\usepackage[tight,footnotesize]{subfigure}
\usepackage{stfloats}
\usepackage{url}
\usepackage{cite} 
\usepackage[geometry]{ifsym}
\usepackage{bbm}
\usepackage{algorithm}
\usepackage{algorithmic}
\usepackage{enumerate}
\usepackage{color}
\usepackage{tikz}
\usetikzlibrary{calc,decorations.pathmorphing}

\let\oldFootnote\footnote
\newcommand\nextToken\relax

\renewcommand\footnote[1]{%
    \oldFootnote{#1}\futurelet\nextToken\isFootnote}

\newcommand\isFootnote{%
    \ifx\footnote\nextToken\textsuperscript{,}\fi}

\newtheorem{assumption}{Assumption}

\newtheorem{lemma}{Lemma}
\newtheorem{thm}{Theorem}

\newcommand{\T}{^\top}
\renewcommand{\th}{\text{-th}}

\newcommand{\dt}{{\Delta}}

\newcommand{\To}{T_{o}}

\makeatletter
\newenvironment{proofof}[1]{\par
  \pushQED{\qed}%
  \normalfont \topsep6\p@\@plus6\p@\relax
  \trivlist
  \item[\hskip\labelsep
        \bfseries
    Proof of #1\@addpunct{.}]\ignorespaces
}{%
  \popQED\endtrivlist\@endpefalse
}
\makeatother

\hyphenation{Piaz-za}
\hyphenation{Elettronica}

\begin{document}

\title{Energy management for building district cooling: a distributed approach to resource sharing}

\author{Fabio Belluschi, Alessandro~Falsone, Daniele~Ioli, Kostas~Margellos, Simone~Garatti and Maria~Prandini
\thanks{Research was supported by the European Commission, H2020, under the project UnCoVerCPS, grant number 643921.}\vspace{-1\baselineskip}
\thanks{F. Belluschi, A. Falsone, D. Ioli, S. Garatti and M. Prandini are with the Dipartimento di Elettronica Informazione e Bioingegneria, Politecnico di Milano,
Piazza Leonardo da Vinci 32, 20133 Milano, Italy, e-mail: \texttt{\{fabio.belluschi, alessandro.falsone, daniele.ioli, simone.garatti, maria.prandini\}@polimi.it}

K. Margellos is with the Department of Engineering Science, University of Oxford, Parks Road,
Oxford, OX1 3PJ, United Kingdom, e-mail: \texttt{kostas.margellos@eng.ox.ac.uk}
}}

\maketitle
\IEEEpeerreviewmaketitle

\begin{abstract}
This paper deals with energy management in a district where multiple buildings can communicate over a time-varying network and aim at optimizing the use of shared resources like storage systems. We focus on building cooling, and propose an iterative, distributed algorithm that accounts for information privacy, since buildings are not required to disclose information about their individual utility functions and constraint sets encoding, e.g., their consumption profiles, and overcomes the communication and computational challenges imposed by centralized management paradigms. Our approach relies on a methodology based on proximal minimization that has recently appeared in the literature. Motivated by the structure of the considered energy management optimization program, we provide a theoretical extension of this novel methodology, that is applicable to any problem that exhibits such structural properties. The efficacy of the resulting energy management algorithm is illustrated by means of a detailed simulation based study, considering different network topologies.
\end{abstract}

\begin{IEEEkeywords}
Energy management, building control, smart grid control, distributed optimization, proximal minimization.
\end{IEEEkeywords}

\section{Introduction} \label{sec:secI}
\IEEEPARstart{O}{ptimal} energy management in buildings has attracted significant attention worldwide, since recent studies \cite{Lausten_2008} have shown that more than 30\% of the total electricity consumption in Europe and in the United States is related to buildings and half of that to climate control. On the same time, a paradigm shift in energy operations has been observed, where energy management shall be performed at a network level, with buildings sharing certain equipment such as storage devices that might be expensive to have at an individual level. Constructing algorithms for optimal energy management in building networks will play a prominent role in the envisioned operational paradigm, allowing demand modulation through intelligent control and coordination of certain appliances, or demand deferrability by appropriate use of the storage devices. To achieve this, not only conventional energy management methods need to be revisited, but also conceptually different control and coordination schemes have to be designed.

Towards this direction, optimization based algorithms have been already successfully applied to the problem of energy management in buildings, due to their ability to handle the multi-objective nature of the problem (e.g., minimize energy costs, maximize building utility), while taking physical and/or technological constraints (e.g., storage limits, comfort constraints) into account. Studies in this direction include, but are not limited to, \cite{Henze_etal_2005,Ma_etal_2009,Siroky_etal_2011,Ma_etal_2012,Oldewurtel_etal_2012,Deng_etal_2013}. Moreover, numerical tools that support these algorithms together with further modeling refinements have been documented in \cite{Sturzenegger_etal_2016}, under the framework of the OptiControl research project \cite{OptiControl}. In most of these works the building modules are modelled by resistance-capacitance circuits, where the temperature set-point of each building zone or room is treated as control input (assuming that a low level controller will track this set-point), set according to the optimization program that encodes the energy management problem. To achieve tractability, both the objective and the constraint functions are chosen to be convex (or some convex approximation is employed) and linearized system dynamics are considered.

Despite the notable research activity in building control and energy management, in all the aforementioned references the underlying algorithm refers to a single building or room. Recently, in \cite{Ioli_etal_2015,Ioli_etal_2016}, a compositional perspective is adopted, allowing for smart-grid control that involves scheduling of multiple building zones, chiller plants, storage devices, etc, interacting with each other, whereas in \cite{Darivianakis_etal_2015} an energy-hub perspective is adopted, investigating the problem of managing a collection of buildings in a cooperative manner. However, the network encoding the interaction among the different modules is considered to be time-invariant, and the problem is solved in a centralized fashion.
In~\cite{ioli_CDC2015}, a hierarchical scheme implementing a decentralized heuristic solution is proposed, which accounts also for the on-off switching of devices.
In \cite{Chang_etal_2013} a decentralized control methodology is applied to a home energy management problem, whereas in \cite{Kranning_etal_2014} a decentralized approach for solving optimal power flow type of problems in transmission networks is proposed, using the alternating direction method of multipliers \cite{Bertsekas_Tsitsiklis_1997,Boyd_etal_2010}. In both cases the underlying  network topology is assumed to be time-invariant. There are two main challenges in all these approaches: First, a time-invariant network does not allow encoding unanticipated events like component and/or communication failures. Second, adopting a centralized or decentralized regulation regime requires all buildings to share information about their consumption patterns, encoded through their individual objective (utility) functions and constraint sets, with each other, or to disclose this information to some central authority. This raises, however, privacy issues, increases the communication requirements and poses computational challenges for large scale problem instances.

In this paper, we overcome these difficulties and deal with the problem of distributed energy management in buildings connected over a (possibly) time-varying network, sharing common resources like a storage unit. To model each individual building in the network we adopt the convex formulation proposed in \cite{Ioli_etal_2015,Ioli_etal_2016}, which constitutes a model of finer granularity compared to \cite{Sturzenegger_etal_2016}. Our distributed methodology is based on the iterative algorithm that has been recently proposed in \cite{Margellos_ACC2016,consensus_paper_2016}, and relies on proximal minimization \cite{Bertsekas_Tsitsiklis_1997}. It is of similar nature with gradient/subgradient algorithms \cite{Nedic_Ozdaglar_2009,Nedic_etal_2010,Chang_etal_2014}, however, it imposes fewer assumptions and it does not require differentiability of the objective function or computation of subgradients. The reader is referred to \cite{consensus_paper_2016} for a detailed comparison with gradient/subgradient algorithms. The building control problem exhibits a particular structure, involving a high number of local optimization variables, and only a few number of global ones that couple the objective functions and the constraint sets of the buildings. We exploit this structure and propose an extension of the algorithm in \cite{Margellos_ACC2016,consensus_paper_2016}, which is applicable to any problem that exhibits such a structure and has the advantage of reducing the exchange of information to the global optimization variables only, thus achieving significant communication savings compared to \cite{Margellos_ACC2016,consensus_paper_2016}. The value taken by the global optimization variables will indeed affect the optimization of the local ones, which, however, is performed locally to each agent. The theoretical guarantees provided in \cite{Margellos_ACC2016,consensus_paper_2016} on the convergence to a minimizer of the centralized counterpart of the problem are shown to still hold in this structured setting.
Note that in \cite{Chang_etal_2014}, a distributed algorithm over a time-varying network is developed, and is applied to a demand side management problem. However, it does not involve models of the same detail as the ones considered here, while from an algorithmic point of view, it is based on subgradient methods as opposed to the proximal minimization approach adopted in this paper, and involves a primal-dual iterative scheme that leads to a higher communication exchange compared to the proposed algorithm that does not require a dual update step.

Our contributions can be summarized as follows: 1) We provide a general formulation of the energy management problem over time-varying networks. 2) We extend the distributed algorithm of \cite{Margellos_ACC2016,consensus_paper_2016} to a certain class of problems, achieving optimality while imposing fewer communication requirements. 3) We apply the proposed algorithm to the building energy management problem, and perform a detailed simulation based study, considering different network topologies.

The rest of the paper unfolds as follows: In Section \ref{sec:secII} we provide details regarding the modeling of a district network, which include buildings and their chillers, and a shared thermal storage. Section \ref{sec:secIII} provides the formulation of the energy management problem over a building network, and includes a detailed exposition and analysis of the proposed distributed algorithm. Section \ref{sec:secIV} provides a simulation based study, whereas Section \ref{sec:secV} concludes the paper and provides some directions for future work. All proofs omitted from the main body of the paper can be found in the Appendix.

\section{Mathematical modeling} \label{sec:secII}
In this section we provide a description of the mathematical models for an individual building, a chiller plant and an energy storage, which constitute the basic modules of the district network for the analysis of Section \ref{sec:secIII}, where the energy management problem of a network of buildings interacting with each other, each of them equipped with its own chiller plant, that share a common storage, is considered.
The models considered in this work are necessarily simplified to facilitate the control purposes of Section \ref{sec:secIII}, but they offer an accurate enough representation of a building from an energy management point of view.
We will focus on energy management over a finite time horizon, divided into $n_t$ time slots, each of them having duration $\dt \in \mathbb{R}$. Therefore, modeling concerns the energy contribution of the building, the chiller plant and the storage per time slot $t$, $t=1,\ldots,n_t$. In the sequel we provide a detailed description for each of them.

\subsection{Building energy request} \label{sec:secIIA}
Consider a building composed of $n_z$ zones. For all $t=1,\ldots,n_t$, $z=1,\ldots,n_z$, let $E_{B,z}(t) \in \mathbb{R}$ be the cooling energy request of building zone $z$ during time slot $t$. Denote then by $E_B(t) = \sum_{z=1}^{n_z} E_{B,z}(t)$ the energy request of the building over the time slot $t$, and by
\begin{align}
	E_B=\sum_{t=1}^{n_t} E_{B}(t) = \sum_{t=1}^{n_t} \sum_{z=1}^{n_z} E_{B,z}(t), \label{eq:total_energy}
\end{align}
the energy request of the building over the entire horizon.
For all $t=1,\ldots,n_t$,, $z=1,\ldots,n_z$, $E_{B,z}(t)$ constitutes of four energy contributions, namely
\begin{align}
	E_{B,z}(t) = E_{\mathrm{walls},z}(t) &+ E_{\mathrm{people},z}(t) \nonumber \\
&+ E_{\mathrm{internal},z}(t) + E_{\mathrm{inertia},z}(t), \label{eq:cooling_energy}
\end{align}
where $E_{\mathrm{walls},z}(t) \in \mathbb{R}$ is the amount of thermal energy exchanged between walls and zone
$z$ over the time slot $t$, $E_{\mathrm{people},z}(t) \in \mathbb{R}$ and $E_{\mathrm{internal},z}(t) \in \mathbb{R}$ is the thermal energy produced by people and by other internal sources of heat in zone $z$, respectively, and $E_{\mathrm{inertia},z}(t) \in \mathbb{R}$ is the energy contribution of the thermal inertia of zone $z$, over the time slot $t$. We next provide a detailed description for each of these terms.

\subsubsection{Walls-zone energy exchange}
Walls separate the building zones from each other, as well as from the outside ambient.
Each wall is divided into vertical layers, referred to as slices, that differ in width and material composition, while each of them is assumed to have uniform density and temperature.
Each internal slice exchanges heat only with nearby slices
through conduction, while boundary slices also exchange heat via convection and thermal radiation through surfaces that are exposed towards either a zone or the outdoor environment.
External surfaces are assumed to be grey and opaque, with equal absorbance and emissivity and with zero transmittance.
Absorbance and emissivity are wavelength-dependent quantities, hence we consider different values for shortwave and longwave radiation \cite{Kim_etal_2013}.

Let $n_w$ denote the number of walls in the building, and assume that each wall is composed of $n_s$ slices. The fact that we assume the same number of slices per wall is to simplify notation, while the case of a different number of slices per wall is straightforward to model. Following \cite{Ioli_etal_2015}, for all $w = 1,\ldots,n_w$, $s = 1,\ldots,n_s$, the temperature $T_{w,s}$ in slice $s$ of wall $w$, evolves according to
\begin{align}
	\dot{T}_{w,s} &= \frac{1}{C_{w,s}} \Big[(h_{w,s}^{s-1} + \bar{h}_{w,s}^{s-1})T_{w,s-1} + (h_{w,s}^{s+1} + \bar{h}_{w,s}^{s+1})T_{w,s+1} \nonumber \\
                    &-(h_{w,s}^{s-1} + \bar{h}_{w,s}^{s-1} + h_{w,s}^{s+1} + \bar{h}_{w,s}^{s+1})T_{w,s} \nonumber \\
					&+ \alpha_{w,s}^S Q^S + \alpha_{w,s}^L Q^L - \varepsilon_{w,s} Q^R(T_{w,s}) + Q^G_{w,s} \Big],
	\label{eq:slice_balance}
\end{align}
where $C_{w,s} \in \mathbb{R}$ denotes the thermal capacity per unit area, and $h_s^{s'}, \bar{h}_s^{s'} \in \mathbb{R}$, with $s' = s-1, s+1$, represent the conductive and the convective heat transfer coefficients between slices $s$ and $s'$, respectively. $Q^S, Q^L \in \mathbb{R}$ denote the incoming shortwave and longwave radiation power per unit area, respectively, while $\alpha_{w,s}^S, \alpha_{w,s}^L \in \mathbb{R}$ are the corresponding absorbance rates for slice $s$ of wall $w$. $Q^R(\cdot): \mathbb{R} \to \mathbb{R}$ is the emitted radiation as a function of the temperature $T_{w,s}$, $\varepsilon_{w,s} < 1$ is the emissivity, and $Q^G_{w,s} \in \mathbb{R}$ is the thermal power generation inside slice $s$ of wall $w$.

In \eqref{eq:slice_balance}, for $s=1$, $s=n_s$, the quantities $T_{w,0}$ and $T_{w,n_s+1}$ appear, respectively. They denote the temperature of either a zone of the building or the ambient temperature, according to whether the boundary slice $1$ (similarly for $n_s$) of wall $w$ is at the border with some building zone or with the outdoor environment. It should be noted that, for all $w=1,\ldots,n_w$, we have that
$h_{w,1}^0 = h_{w,n_s}^{n_s+1} = 0$ since there is no thermal conduction on slices that are the walls' boundary surfaces, and
$\bar{h}_{w,s}^{s-1} = 0$, for all $s>1$, $\bar{h}_{w,s}^{s+1} = 0$, for all $s<n_s$, and $\alpha_{w,s}^S = \alpha_{w,s}^L = \varepsilon_{w,s} = 0$, for all $1<s<n_s$,
since there is no thermal convection nor radiation in between slices.
Since each wall is assumed to be a grey body, the power $Q^R(T_{w,s})$ radiated from slice $s$ of wall $w$, is given by $Q^R(T_{w,s}) = \sigma T_{w,s}^4$, where $\sigma$ is the Stefan-Boltzmann constant (see \cite{Ioli_etal_2015} and references therein). This expression is approximately linear around the mean operating temperature $\bar{T}_{w,s}$ of slice $s$ of wall $w$, and hence the aforementioned nonlinear expression can be approximated by the following equation, which is linear in $T_{w,s}$.
\begin{align}
	Q^R(T_{w,s}) = 4 \sigma \bar{T}_{w,s}^3 T_{w,s} - 3 \sigma \bar{T}_{w,s}^4.
	\label{eq:stefan_boltzmann_lin}
\end{align}

For $w=1,\ldots,n_w$, denote by $T_w = [T_{w,1} \cdots T_{w,n_s}]\T \in \mathbb{R}^{n_s}$ the vector that contains the temperatures of all slices of wall $w$. For $z= 1,\ldots,n_z$, let $\widetilde{T}_z \in \mathbb{R}$ denote the temperature of zone $z$, and $\widetilde{T} = [\widetilde{T}_{1} \cdots \widetilde{T}_{n_z}]\T  \in \mathbb{R}^{n_z}$ be a vector including all zone temperatures, Moreover, let $T_o \in \mathbb{R}$ denote the ambient temperature. By \eqref{eq:slice_balance}, and noticing that for each $w=1,\ldots,n_w$, some elements of $\widetilde{T}$ and/or $T_o$ correspond to the terms $T_{w,0}$, $T_{w,n_s+1}$, $T_w$ depends on $\widetilde{T}$ and $T_o$, and its evolution is given by
\begin{align}
	\dot{T}_w = A_wT_w + B_w\widetilde{T} + F_wd,
	\label{eq:wall_dynamics}
\end{align}
where $d = [\To \; Q^S \; Q^L \; 1]\T$ acts as a disturbance vector, collecting the ambient temperature $\To$, and the incoming shortwave $Q^S$ and longwave $Q^L$ radiation, while the constant $1$ is introduced to account for the constant terms in \eqref{eq:slice_balance} and \eqref{eq:stefan_boltzmann_lin}.
Matrices $A_w$, $B_w$ and $F_w$ are of appropriate dimension and their elements depend on the constant parameters in \eqref{eq:slice_balance}.
Letting $T = [T_1\T \cdots T_{n_w}\T]\T \in \mathbb{R}^{n_s n_w}$ be a vector including the temperatures of all slices and all walls, by \eqref{eq:wall_dynamics} we have that
\begin{align}
	\dot{T} = AT + B\widetilde{T} + Fd,
	\label{eq:building_dynamics}
\end{align}
where $A$ is a block-diagonal matrix with $A_w$ in its $w\th$ block, $B = [B_1\T \cdots B_{n_w}\T]\T$ and $F = [F_1\T \cdots F_{n_w}\T]\T$.

For all $z=1,\ldots,n_z$, let $W_z \subseteq \{1,\ldots,m\}$ denote the set of indices that correspond to walls that are adjacent to zone $z$. For all $z=1,\ldots,n_z$, the thermal power that is transferred to zone $z$ from its adjacent walls is given by $Q_{\mathrm{walls},z} = \sum_{w \in W_z} S_w h_{w,s}^{s'} (T_{w,s} - \widetilde{T}_z)$, where $S_w$ is the surface area of wall $w$. The pair $(s,s')$ is either $(1,0)$ or $(n_s,n_s+1)$, according to which slice (i.e., $s=1$ or $s=n_s$) is at the border of wall $w \in W_z$ with zone $z$.
Defining $Q_\mathrm{walls} = [Q_{\mathrm{walls},1} \cdots Q_{\mathrm{walls},n_z}]\T \in \mathbb{R}^{n_z}$, we can thus represent $Q_\mathrm{walls}$ as a function of $T$ and $\widetilde{T}$, i.e.,
\begin{align}
	Q_\mathrm{walls} = CT + D\widetilde{T},
	\label{eq:building_output}
\end{align}
where $C$ and $D$ are matrices of appropriate dimension.

Equations \eqref{eq:building_dynamics} and \eqref{eq:building_output} form a linear dynamical system with state $T$, input $\widetilde{T}$, disturbance $d$, and output $Q_\mathrm{walls}$.
The obtained system, though linear, can be quite large. However, following \cite{Kim_Brown_2012},
its order can be greatly reduced by applying model reduction techniques; this approach was adopted in the case study of Section \ref{sec:secIV}.

Since the energy management algorithm of Section \ref{sec:secIII} is developed in discrete time, we can discretize
\eqref{eq:building_dynamics}-\eqref{eq:building_output} to obtain a discrete time linear dynamical system. For the discretization process it was assumed that $\widetilde{T}$ and $d$ vary linearly within each time slot of duration $\dt$.
Let then $Q_{\mathrm{walls}}(t \dt)$, $t=1,\ldots,n_t$, be the thermal power calculated by meas of the discretized model, where, with a slight abuse of notation, we use the same symbol with the continuous time model (see \cite{Ioli_etal_2016} for more details on the discretization process).
Due to the fact that the elements of $Q_{\mathrm{walls}}(\cdot)$ were assumed to vary linearly within each time slot, the thermal energy $E_{\mathrm{walls},z}(t)$ transferred by all walls to zone $z$ over the time slot $t$, can be computed from the thermal power as
\begin{align}
E_{\mathrm{walls},z}(t) = \frac{\dt}{2} (Q_{\mathrm{walls},z}((t-1)\dt) + Q_{\mathrm{walls},z}(t \dt)), \label{eq:energy_wz}
\end{align}
for all $z=1,\ldots,n_z$, $t=1,\ldots,n_t$. Note that after discretizing \eqref{eq:building_dynamics}-\eqref{eq:building_output}, $Q_{\mathrm{walls},z}(t \dt)$, and hence also $E_{\mathrm{walls},z}(t)$, depends on an affine fashion on the temperatures of zone $z$ ut to time $t$, i.e., $\widetilde{T}_z(t'\dt)$ for all $t' \leq t$.

\subsubsection{People energy contribution}
People produce heat and, in buildings with high occupancy, their contribution to the total thermal energy generation is significant.
Let $n_{p,z}$ denote the number of occupants in zone $z$, and denote by $Q_{\mathrm{people},z} \in \mathbb{R}$ the thermal power produced by them at zone temperature $\widetilde{T}_z$, $z=1,\ldots,n_z$. Following \cite{Butcher_2006}, $Q_{\mathrm{people},z}$ is given as a product of $n_{p,z}$ and a quadratic function of $\widetilde{T}_z$. However, this function is approximately linear for the sensible range of operating temperatures that are of interest for our analysis. Therefore, without introducing a significant modeling error, for each $z=1,\ldots,n_z$, we can linearize it around some comfort temperature of zone $z$, thus obtaining
\begin{align}
Q_{\mathrm{people},z} = n_{p,z} (p_{1,z} \widetilde{T}_z + p_{0,z} ),
		\label{eq:people_heat_rate_lin}
\end{align}
where $p_{0,z}, p_{1,z} \in \mathbb{R}$ are constants that are different per zone $z$, since they depend on the chosen comfort temperature of zone $z$.

Note that $n_{p,z}$ and $\widetilde{T}_z$ depend on time continuously. Moreover, following \cite{Borghesan_etal_2013}, $n_{p,z}$ can be approximated as a linear function of time, while $\widetilde{T}_z$, $z=1,\ldots,n_z$, has been already assumed to evolve linearly within each time slot. Therefore, for each $t = 1,\ldots,n_t$, $z=1,\ldots,n_z$, we can then integrate the people thermal power $Q_{\mathrm{people},z}$ in \eqref{eq:people_heat_rate_lin} analytically over $[(t-1)\dt, t\dt]$ to obtain the energy contribution $E_{\mathrm{people},z}(t)$ due to people occupancy at zone $z$ over the time slot $t$, i.e.,
\begin{align}
E_{\mathrm{people},z}(t) &= q_{2,z} (t) \widetilde{T}_z(t\dt) \nonumber \\
&+ q_{1,z}(t) \widetilde{T}_z((t-1)\dt) + q_{0,z}(t),
\end{align}
where the coefficients $q_{0,z}(t), q_{1,z}(t), q_{2,z}(t) \in \mathbb{R}$ are different per zone $z$ and time slot $t$, since they depend on the values of $n_{p,z}$ at $(t-1)\dt$ and $t\dt$; further details can be found in \cite{Ioli_etal_2015}.

\subsubsection{Other internal energy contributions}
There are many other types of heat sources that may affect the internal thermal energy of a building, e.g., internal lighting, electrical equipment, daylight radiation through windows, etc. For each $z=1,\ldots,n_z$, the overall thermal energy of zone $z$ due to these sources can be computed as
\begin{equation}
	Q_{\mathrm{internal},z} = \alpha_z Q^S + \beta_z \max(n_{p,z},0) + \gamma_z,
	\label{eq:windows_heat_rate}
\end{equation}
where $\alpha_z \in \mathbb{R}$ is a coefficient that takes into account the mean absorbance coefficient of zone $z$, the transmittance coefficients of the windows and their areas, the sun view and the shading factors, and the radiation incidence angle.
The last two terms in \eqref{eq:windows_heat_rate} encode the thermal energy contribution to zone $z$, $z=1,\ldots,n_z$, due to internal lightening and electrical equipment. In particular, it is composed of a constant term $\gamma_z \in \mathbb{R}$, and an additional contribution
$\beta_z \in \mathbb{R}$ when people are present, i.e., when $n_{p,z} > 0$.
Note that $Q_{\mathrm{internal},z}$ does not depend on the longwave radiation $Q^L$, due to the fact that the windows are usually shielded against it. Note that $\beta_z$ and $\gamma_z$ are time independent, however, $n_{p,z}$, $Q^S$ and $\alpha_z$ depend on time continuously. Therefore, we first discretize the thermal power $Q_{\mathrm{internal},z}$ in \eqref{eq:windows_heat_rate}, and then integrate it to obtain the energy term $E_{\mathrm{internal},z}(t)$ within each time slot $t$, $t=1,\ldots,n_t$.

\subsubsection{Energy contribution due to zone inertia}
To lower the temperature of a zone we need to draw energy from the zone itself. Following \cite{Ioli_etal_2015}, for all $t=1,\ldots,n_t$, $z=1,\ldots,n_z$, this inertial contribution of zone $z$ to the overall thermal energy \eqref{eq:cooling_energy} in the building over the time slot $t$ , can be expressed as
\begin{equation}
	E_{\mathrm{inertia},z}(t) = -\bar{C}_z (\widetilde{T}_z(t\dt) - \widetilde{T}_z((t-1)\dt)),
	\label{eq:zones_energy}
\end{equation}
where $\bar{C}_z \in \mathbb{R}$ is the equivalent heat capacity of the $z\th$ zone.

\subsection{Chiller plant} \label{sec:secIIB}
A chiller plant converts electric energy into cooling energy. The cooling energy is then transferred to the building via, e.g., the chilled water circuit. Denote by $E_{\mathrm{chiller},e}(t) \in \mathbb{R}$ the electric energy absorbed by the chiller to provide cooling energy $E_{\mathrm{chiller},c} \in \mathbb{R}$ over a time slot of duration $t$, $t=1,\ldots,n_t$.
Following the Ng-Gordon model of \cite{Gordon_Ng_2000}, which is based on entropy and energy balance equations, the electric and the cooling energy can be related by
\small{
\begin{align}
 &E_{\mathrm{chiller},e}(t) = \frac{1}{T_{cw}(t \dt)-\frac{\alpha_4}{\dt}E_{\mathrm{chiller},c}(t)} \Big ( \alpha_1 T_o(t\dt) T_{cw}(t\dt) \dt \nonumber \\ &~~~+ \alpha_2(T_o(t\dt)-T_{cw}(t\dt))\dt + \alpha_3T_o(t\dt)E_{\mathrm{chiller},c}(t) \Big ) \nonumber \\ &~~~- E_{\mathrm{chiller},c}(t). \label{eq:chiller}
\end{align}
} \normalsize
$T_o(t\dt)$ denotes the ambient temperature and $T_{cw}(t\dt)$ the temperature of the cooling water at time slot $t$, $t=1,\ldots,n_t$. The latter is typically regulated by low level controllers so that it is maintained at some prescribed optimal operational value.
Coefficients $\alpha_1, \alpha_2, \alpha_3, \alpha_4 \in \mathbb{R}$ characterize the chiller performance and, depending on their values, we can have different efficiency curves as given by the so called coefficient of performance (COP). The COP is the ratio between the produced cooling energy and the corresponding electrical energy consumption. As such, the larger is the COP, the more efficient the chiller is.

To facilitate the optimization based algorithm of Section \ref{sec:secIIIB}, we employ the following convex approximation of \eqref{eq:chiller}, which results in a biquadratic relationship between the electrical and the cooling energy of the chiller.
\begin{align}
    E_{\mathrm{chiller},e}(t) &= c_2(T_o(t\dt)) E^4_{\mathrm{chiller},c}(t) \nonumber \\ &+ c_1(T_o(t\dt)) E^2_{\mathrm{chiller},c}(t) + c_0(T_o(t\dt)), \label{eq:chiller_biquad}
\end{align}
where, for each $t=1,\ldots,n_t$, the functions $c_0(\cdot), c_1(\cdot), c_2(\cdot): \mathbb{R} \to \mathbb{R}$ depend on the ambient temperature $T_o(t\dt)$. The approximate relationship in \eqref{eq:chiller_biquad} is determined from \eqref{eq:chiller} using weighted least squares to best fit the most relevant points, i.e, those that correspond to zero energy request and to the maximum COP values.
The quality of the biquadratic approximation in \eqref{eq:chiller_biquad}, compared to the full model in \eqref{eq:chiller}, is illustrated in Figure \ref{fig:chiller}.

\begin{figure}[ht]
 \centering
 \includegraphics[width = 0.9\columnwidth]{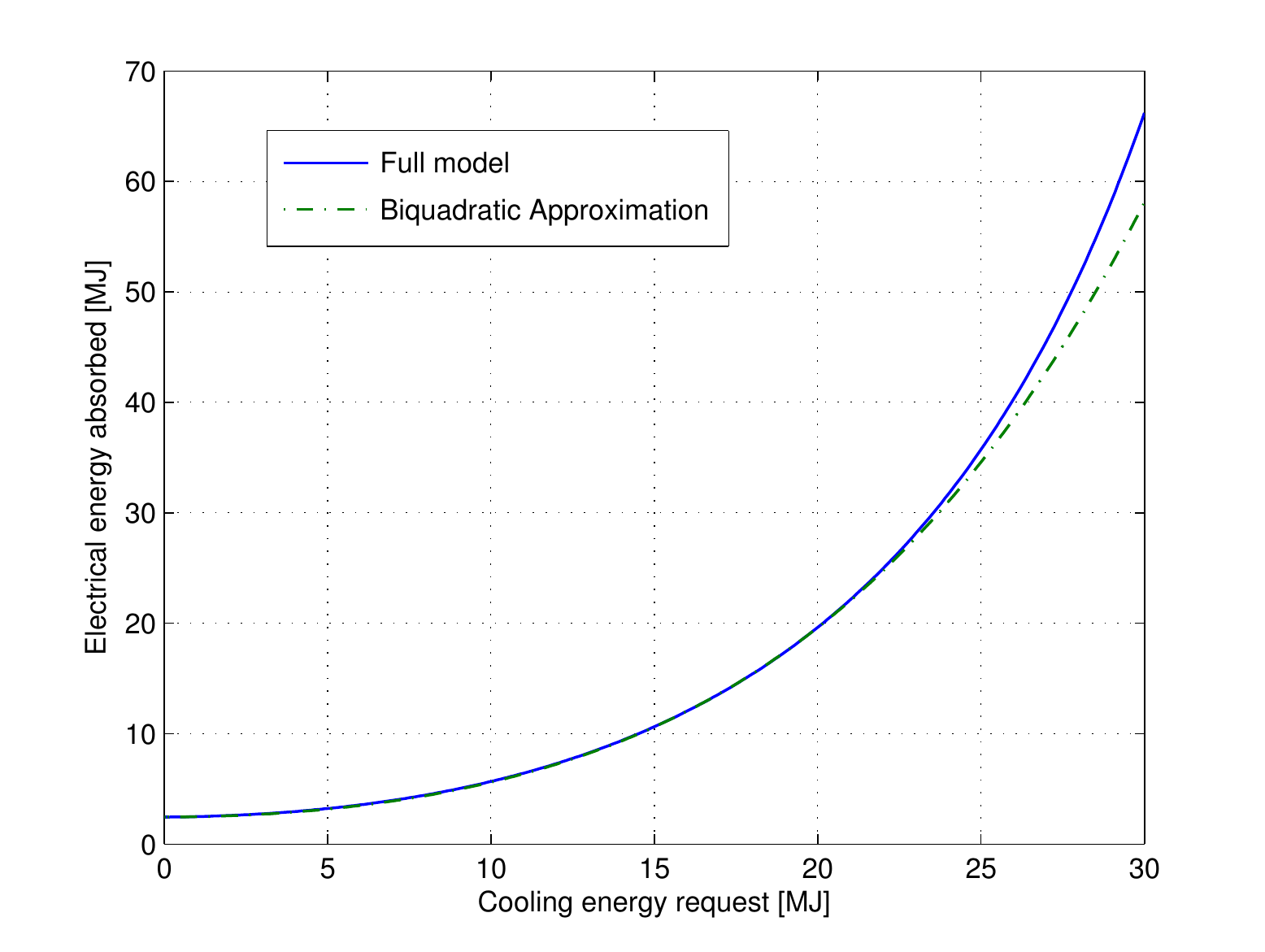}
 \caption{Electrical energy consumption as a function of the cooling energy request: Full model (solid line) and its biquadratic approximation (dashed-dotted line).}
 \label{fig:chiller}
\end{figure}

\subsection{Energy storage} \label{sec:secIIC}
Thermal energy storage is becoming widely used since it represents the most effective way, and often the only way, to take advantage of renewable energy sources. There are many different technical solutions to store thermal energy; the most widely used are fluid tanks and phase changing materials. In buildings, and more generally in a smart grid context, they can be used as energy buffers for unbinding energy production from energy consumption. In particular, thermal energy storage for cooling energy can shift the production of cooling energy to off-peak hours of electrical energy consumption, make chillers operate in high-efficiency conditions, and smoothen peaks of electrical energy request with benefits both for power production and distribution network systems.

From an energy oriented perspective we will model a thermal energy storage employing black box system identification techniques, that use the energy exchange (drawn or inserted) as input and the thermal energy stored as output. This way of modeling does not consider the way energy is stored or provided. To this end, a first order AutoRegressive eXogenous (ARX) system is considered:
\begin{align}
E_{\mathrm{storage}}(t+1) = aE_{\mathrm{storage}}(t) - \sum_{i=1}^m e_s^i(t), \label{eq:storage}
\end{align}
where $E_{\mathrm{storage}}(t) \in \mathbb{R}$ is the amount of cooling energy stored. In view of the multi-building problem considered in the next section we assume that the storage device is shared among $m$ buildings, and denote by $e_s^i(t) \in \mathbb{R}$ the cooling energy exchanged ($e_s^i(t)>0$ if the storage is discharged, and  $e_s^i(t)<0$ if it is charged), with building $i$, $i=1,\ldots,m$, in time slot $t$, $t=1,\ldots,n_t$.  The coefficient $a \in (0,1)$ is introduced to model energy losses. Note that \eqref{eq:storage} can be thought of as a discrete time integrator, where the stored energy $E_{\mathrm{storage}}(t+1)$ at time $t+1$ is computed by accumulating the cooling energy exchanged with all buildings up to time $t$, i.e., $\sum_{i=1}^m e_s^i(t')$ for all $t' \leq t$.

\section{Energy management of a building network} \label{sec:secIII}
\subsection{Problem statement} \label{sec:secIIIA}
Consider a network of $m$ buildings, each of them equipped with a different chiller plant, that share a common storage so as to avoid usage inefficiencies and increase the return on investment of the storage resource, which might be expensive to afford at an individual level. To this end, append to all quantities introduced in the previous section the superscript $i$, to denote that they correspond to building $i$, $i=1,\ldots,m$, e.g., $E_{\mathrm{chiller},e}^i(t)$ denotes the cooling energy of the chiller at building $i$ at time slot $t$, $\widetilde{T}^i$ denotes the vector of zone temperatures at building $i$, etc.

For each $i=1,\ldots,m$, $t=1,\ldots,n_t$, the electric energy request of building $i$ over the time slot $t$ is given by the chiller electric energy request $E_{\mathrm{chiller},e}^i(t)$. The latter is in turn related to the cooling energy exchange terms via \eqref{eq:chiller_biquad}.
Our objective is to minimize the total electric energy cost for the $m$ building network, across a horizon of $n_t$ steps. To achieve this, for each building $i$, $i=1,\ldots,m$, we will schedule the zone temperature set-points $\widetilde{T}^i(t \dt)$, the energy exchange $e_s^i(t)$ with the storage, and the initial conditions for the temperature vector
$T^i(\dt)$ (including slice and wall temperatures), and the storage level $E_{\mathrm{storage}}(1)$. Therefore, we seek to solve the following minimization problem:
\begin{align}
\min_{\begin{subarray}{c} \big \{ \big \{ \widetilde{T}^i(t \dt) \in \mathbb{R}^{n_z}, e_s^i(t) \in \mathbb{R} \big \}_{t=1}^{n_t} \big \}_{i=1}^m, \\ \big \{ T^i(\dt) \in \mathbb{R}^{n_s n_w}\big \}_{i=1}^m, E_{\mathrm{storage}}(1) \in \mathbb{R} \end{subarray}}
\sum_{i=1}^m \sum_{t=1}^{n_t} \psi^i(t) E_{\mathrm{chiller},e}^i(t), \label{eq:P_obj_full}
\end{align}
where $\psi^i(t) \in \mathbb{R}$ is the electric energy price for building $i$, $i=1,\ldots,m$, over the time slot $t$, $t=1,\ldots,n_t$. This minimization is subject to the following constraints.
\begin{enumerate}
\item Electric energy request: For each $t=1,\ldots,n_t$, the electric energy request $E_{\mathrm{chiller},e}^i(t)$ of building $i$, $i=1,\ldots,m$, is given by \eqref{eq:chiller_biquad} as a function of the chiller cooling energy request $E_{\mathrm{chiller},c}^i(t)$. The latter denotes the net cooling energy request, and is given by the difference between the total energy requested by the building minus the energy exchanged with the storage, i.e.,
    \begin{align}
    E_{\mathrm{chiller},c}^i(t) = E_{B}^i(t) - e_s^i(t), \label{eq:ch_energyReq}
    \end{align}
    where $E_{B}^i(t)$ is as shown in \eqref{eq:total_energy}, and $e_s^i(t)$ is the energy exchange between building $i$ and the storage (see Section \ref{sec:secIIC}).
\item Electric energy limits: For each $i=1,\ldots,m$, $t=1,\ldots,n_t$, the electric energy drawn from the network is limited to $E_{\max}^i \in \mathbb{R}$, as an effect of the chiller unit size and maximum capability, thus giving rise to
    \begin{align}
    E_{\mathrm{chiller},e}(t) \leq E_{\max}^i. \label{eq:ch_elecEreq}
    \end{align}
\item Cooling energy limits: For each $i=1,\ldots,m$, $t=1,\ldots,n_t$, the cooling energy request $E_B^i(t)$ of building $i$ over time-slot $t$, as given by \eqref{eq:total_energy}, is non-negative, i.e.,
    \begin{align}
    E_{B}^i(t) \geq 0, \label{eq:ch_coolEreq}
    \end{align}
\item Comfort constraints: For each $i=1,\ldots,m$, $t=1,\ldots,n_t$, the zone temperature set-points is within certain limits, i.e.,
    \begin{align}
    \widetilde{T}^i(t \dt) \in [\widetilde{T}^i_{\min}(t \dt),~ \widetilde{T}^i_{\max}(t \dt)], \label{eq:comfort_con}
    \end{align}
    where $\widetilde{T}^i_{\min}(t \dt) \in \mathbb{R}^{n_z}$, $\widetilde{T}^i_{\max}(t \dt) \in \mathbb{R}^{n_z}$ denote the minimum and maximum, respectively, temperature limits, so that comfort is maintained. These limits may differ according to the type of each building.
\item Storage energy limits: For each $t=1,\ldots,n_t$, the amount of cooling energy stored at time slot $t$ should be non-negative and within a prescribed energy storage limit $E_{s,
    \max} \in \mathbb{R}$, i.e.,
    \begin{align}
    E_{\mathrm{storage}}(t) \in [0,~ E_{s,\max}]. \label{eq:storage_Econ}
    \end{align}
\item Storage energy exchange limits: For each $i=1,\ldots,m$, $t=1,\ldots,n_t$, the energy exchanged with the storage is subject to
    \begin{align}
    e_s^i(t) \in [-e_{s,\max}^i,~ e_{s,\max}^i], \label{eq:storage_Pcon}
    \end{align}
    where $e_{s,\max}^i \in \mathbb{R}$ denotes the maximum value of energy that can be exchanged with the storage for building $i$. Notice that we use symmetric limits for positive and negative energy exchanges.
\item Final value constraints: For each $i=1,\ldots,m$, the zone temperature, and the wall-slice temperature, at the beginning and at the end of the planning horizon should be equal, i.e.,
    \begin{align}
    \widetilde{T}^i(n_t \dt) &= \widetilde{T}^i(\dt), \nonumber \\
    T^i(n_t \dt) &= T^i(\dt). \label{eq:init_T}
    \end{align}
    To ensure that at the end of the horizon the storage is sufficiently charged we impose the constraint
    \begin{align}
    E_{\mathrm{storage}}(n_t) \geq E_{\mathrm{storage}}(1), \label{eq:init_storage}
    \end{align}
    where we optimize with respect to $E_{\mathrm{storage}}(1)$ (see \eqref{eq:P_obj}). Constraint \eqref{eq:init_storage} is of particular importance in case of a receding horizon implementation of the proposed scheme.
\end{enumerate}
Note that, even though it is not shown explicitly to ease notation, $E_{\mathrm{chiller},e}^i(t)$ in \eqref{eq:P_obj_full} is a function of the decision variables $\big \{ \big \{ \widetilde{T}^i(t \dt) , e_s^i(t) \big \}_{t=1}^{n_t} \big \}_{i=1}^m, \big \{ T^i(\dt) \big \}_{i=1}^m, E_{\mathrm{storage}}(1)$; this can be verified by tracing the representation of $E_{\mathrm{chiller},e}^i(t)$ via \eqref{eq:chiller_biquad}, \eqref{eq:ch_energyReq}, \eqref{eq:total_energy}, where the energy terms in the latter equation depend on the decision variables according to the analysis of Section \ref{sec:secII}.

For each $i=1,\ldots,m$, denote by
\begin{align}
u_i = \big [ \widetilde{T}^i(\dt), \ldots, \widetilde{T}^i(n_t \dt), T^i(\dt) \big ]^\top \in \mathbb{R}^{n_t n_z + n_sn_w}, \label{eq:var_u}
\end{align}
all temperature related decision variables that correspond to building $i$. Let $\bar{e}_s^i = \big [ e_s^i(1), \ldots, e_s^i(n_t), E_{\mathrm{storage}}(1) \big ]^\top \in \mathbb{R}^{n_t+1}$, and denote by
\begin{align}
x = \big [ \bar{e}_s^1, \ldots, \bar{e}_s^m \big ]^\top \in \mathbb{R}^{m (n_t+1)}, \label{eq:var_x}
\end{align}
the vector including all decision variables related to the energy exchange between the buildings and the storage, and the initial energy storage value. Note that $u_i$ is indexed by $i$, $i=1,\ldots,m$, and can be thus thought of as a local decision vector related to the comfort and actuation constraints of each chiller plant, that can be enforced locally. On the other hand, $x$ is treated as a global decision vector which is related to the energy exchange of the building network with the common storage device. Under the variable assignment in
\eqref{eq:var_u}, \eqref{eq:var_x}, the energy management in \eqref{eq:P_obj_full}-\eqref{eq:init_storage} can be represented in a more compact notation by
\begin{align}
\mathcal{P}:~ & \min_{x \in \mathbb{R}^n, \{u_i \in \mathbb{R}^{n_i} \}_{i=1}^m} \sum_{i=1}^m f_i(x,u_i) \label{eq:P_obj} \\
&\text{subject to } \nonumber \\
&(x,u_i) \in V_i, \text{ for all } i=1,\ldots,m, \label{eq:P_con}
\end{align}
where $f_i(\cdot,\cdot): \mathbb{R}^n \times \mathbb{R}^{n_i} \to \mathbb{R}$ and $V_i \subseteq \mathbb{R}^{n+n_i}$, for all $i=1,\ldots,m$.
Note that $x$ couples the individual decision vectors $u_i$ via the objective  in \eqref{eq:P_obj} and the constraints in \eqref{eq:P_con}.

\subsection{Distributed algorithm} \label{sec:secIIIB}

In this section we will occasionally refer to buildings as agents. We provide a distributed iterative procedure to solve $\mathcal{P}$, where, at every iteration, each agent $i$ solves an appropriate local optimization problem and then exchanges information with other agents only regarding the temporarily obtained value for the common decision vector $x$. In this way, one can account for information privacy, because agents are not required to share the objective function $f_i$, the constraint set $V_i$, and their local decision vector $u_i$, $i=1,\ldots,m$. In a building energy management context, this specifically means that each building does not need to reveal constraints that are related to its local consumption patterns or to occupants' preferences, nor to reveal information about its individual utility function, which may constitute private information in case buildings participate in a demand response program. Moreover, even though all the necessary information could be exchanged, solving $\mathcal{P}$ in a centralized fashion may result computationally intensive
and our distributed algorithm is also a means to alleviate this issue.

The pseudo-code of the proposed distributed procedure is given in Algorithm \ref{alg:Alg1}. In the remainder of this subsection we provide some explanations of the algorithm steps, whereas in Section \ref{sec:secIIIC} we show that, under certain structural and communication assumptions, it converges, and agents reach consensus to a common value for the global decision vector $x$ that, together with the converged values for the local decision vectors $u_i$, $i=1,\ldots,m$, forms an optimal solution of $\mathcal{P}$ (note that $\mathcal{P}$  does not necessarily admit a unique solution).

\begin{algorithm}[t]
\caption{Distributed algorithm}
\begin{algorithmic}[1]
\STATE \textbf{Initialization} \\
\STATE ~~$k=0$. \\
\STATE ~~Consider $x_i(0)$, $u_i(0)$, \\
~~such that $(x_i(0),u_i(0)) \in V_i$ for all $i=1,\ldots,m$. \\
\STATE \textbf{For $i=1,\ldots,m$ repeat until convergence} \\
\STATE ~~$\bar{x}_i(k) = \sum_{j=1}^m a_j^i(k) x_j(k)$. \\
\STATE ~~$(x_i(k+1),u_i(k+1))$ \\
~~~~~~$\in \arg \min_{(x_i,u_i) \in V_i} f_i(x_i,u_i) + \frac{1}{2 c(k)} ||\bar{x}_i(k) - x_i||^2$. \\
\STATE ~~$k \leftarrow k+1$.
\end{algorithmic}
\label{alg:Alg1}
\end{algorithm}

Initially, each agent $i$, $i=1,\ldots,m$, starts with some tentative values $u_i(0)$ and $x_i(0)$ for its local decision vector and the global decision vector, respectively. The latter constitutes an estimate of agent $i$ (this justifies the subscript $i$ in $x_i$) of what the value of the global decision vector might be. Those tentative values are chosen arbitrarily from the set of feasible solutions, i.e., $(x_i(0),u_i(0)) \in V_i$ (step 3).
One sensible choice for $(x_i(0),u_i(0))$ is to set it such that $(x_i(0),u_i(0)) \in \arg \min_{(x_i,u_i) \in V_i, } f_i(x_i,u_i)$.
At iteration $k+1$, each agent $i$ constructs a weighted average $\bar{x}_i(k)$ of the solutions $x_j(k),~j=1,\ldots,m$ communicated by the other agents and its own one (step 5). Coefficient $a_j^i(k) \geq 0$, indicates how agent $i$ weights the solution received by agent $j$ at iteration $k$, and $a_j^i(k) = 0$ encodes the fact that agent $i$ does not receive any information from agent $j$ at iteration $k$ (i.e. the communication link between agents $i$ and $j$ is not active at iteration $k$). Agent $i$ solves then a local minimization problem, seeking the optimal solution pair $(x_i,u_i)$ within $V_i$ that minimizes a performance criterion, which is defined as a linear combination of the local objective function $f_i(x_i,u_i)$ and a quadratic term\footnote{Throughout the paper, $||\cdot||$ denotes Euclidean norm.}, penalizing the difference from $\bar{x}_i(k)$ (step 6). The relative importance of these two terms is dictated by $c(k) > 0$. Since multiple minimizers may exist, we assume that at every iteration the same deterministic tie-break rule (as e.g. that implemented by a deterministic numerical solver) is used.

Algorithm \ref{alg:Alg1} is closely related to the distributed methodology that has been recently proposed in \cite{Margellos_ACC2016,consensus_paper_2016}. However, in Algorithm \ref{alg:Alg1} neighboring agents need to exchange at every iteration their tentative estimates for the value of the global decision vector
only, while the distributed algorithm in \cite{Margellos_ACC2016,consensus_paper_2016} requires to exchange both the global and the local decision vectors. When the dimension of the local decision vector is high compared to the global one, as it is the case in the building energy management problem, this would unnecessarily increase the amount of information that needs to be exchanged. Algorithm \ref{alg:Alg1} alleviates this issue by extending the approach of \cite{Margellos_ACC2016,consensus_paper_2016} to exploit the particular structure of $\mathcal{P}$, where the objective functions and the constraint sets are coupled only by means of $x$.

\subsection{Algorithm analysis} \label{sec:secIIIC}
In this section we study the convergence properties of Algorithm \ref{alg:Alg1}. To this end, we first impose certain assumptions, that need to be satisfied for Algorithm \ref{alg:Alg1} to converge, and then provide a convergence proof. The implications of these assumptions on the energy management problem under study are discussed in Section \ref{sec:secIIID}.

\subsubsection{Assumptions} \label{sec:secIIIC1}

The following structural assumptions are in order.
\begin{assumption} \label{ass:Convex}
 For all $i=1,\ldots,m$, the function $f_i(\cdot,\cdot): \mathbb{R}^n \times \mathbb{R}^{n_i} \to \mathbb{R}$ is jointly convex with respect to its arguments.
 Moreover, for all $i=1,\ldots,m$, $f_i(\cdot,\cdot): \mathbb{R}^n \times \mathbb{R}^{n_i} \to \mathbb{R}$ is jointly Lipschitz continuous with respect to its arguments.
\end{assumption}

Note that under Assumption \ref{ass:Convex}, and due to the presence of the quadratic penalty term , the objective function in the optimization problem at step 6 of Algorithm \ref{alg:Alg1} is strictly convex with respect to $x_i$. Therefore, a unique solution for $x_i$ is admitted; this is not the case for $u_i$.

\begin{assumption} \label{ass:Compact}
For all $i=1,\ldots,m$, the set $V_i \subseteq \mathbb{R}^{n+n_i}$ is compact and convex. Moreover,
$\bigcap_{i=1}^m V_i$ has non-empty interior.
\end{assumption}
For all $i=1,\ldots,m$, for any $x \in \mathbb{R}^n$, consider the set
\begin{align}
U_i(x) = \big \{ u_i \in \mathbb{R}^{n_i}:~ (x,u_i) \in V_i \big \}. \label{eq:con_u}
\end{align}
Moreover, for all $i=1,\ldots,m$, consider the projection of $V_i$ on the $x$ domain, i.e.,
\begin{align}
X_i = \big \{ x \in \mathbb{R}^{n}:~ \exists u_i \in \mathbb{R}^{n_i} \text{ such that } (x,u_i) \in V_i \big \}. \label{eq:con_x}
\end{align}
A direct consequence of the first part of Assumption \ref{ass:Compact} is that, for all $i=1,\ldots,m$, $X_i$ and $U_i(x)$ for any $x \in X_i$ are all compact and convex. By the second part of Assumption \ref{ass:Compact}, we also have that $\bigcap_{i=1}^m X_i$, and hence also $X_i$, $i=1,\ldots,m$, has a non-empty interior. Moreover, $U_i(x)$ is non-empty for any $x \in X_i$, $i=1,\ldots,m$.

The fact that $V_i$ is both convex and compact implies that the set-valued mapping $U_i(\cdot)$ is continuous on $X_i$, see \cite{Aubin_1991}. In the following assumption we further require that $U_i(\cdot)$ is Lipschitz continuous.
\begin{assumption} \label{ass:Lipschitz}
For all $i=1,\ldots,m$, the set-valued mapping $U_i(\cdot): X_i \rightrightarrows \mathbb{R}^{n_i}$ is Lipschitz continuous, i.e., there exists $L_i \in \mathbb{R}$, $L_i >0$, such that
\begin{align}
d_H(U_i(x),U_i(x')) \leq L_i ||x - x'||, \text{ for all } x, x' \in X_i,\label{eq:Lipschitz_set}
\end{align}
where
\begin{align}
&d_H(U_i(x),U_i(x')) \nonumber \\
&=\sup_{u_i \in \mathbb{R}^{n_i}} \big | \min_{v_i \in U_i(x)} ||u_i - v_i|| -  \min_{v'_i \in U_i(x')} ||u_i - v'_i|| \big |, \label{eq:hausdorff}
\end{align}
denotes the Pompeiu-Hausdorff distance (see p. 272 in \cite{Wets_2003}) between the sets $U_i(x)$ and $U_i(x')$.
\end{assumption}

We also impose the following technical assumptions
\begin{assumption} \label{ass:ConvCoef}
$\{c(k)\}_{k \geq 0}$ is a non-increasing sequence with $c(k) > 0$ for all $k$. Moreover,
$\sum_{k=0}^{\infty} c(k) = \infty$ and $\sum_{k=0}^{\infty} c(k)^2 < \infty$.
\end{assumption}
A direct consequence of the last part of Assumption \ref{ass:ConvCoef} is that $\lim_{k \rightarrow \infty} c(k) = 0$. One choice for $\{c(k)\}_{k \geq 0}$ that satisfies the conditions of Assumption \ref{ass:ConvCoef} is to select it from the class of generalized harmonic series, e.g., $c(k) = \alpha/(k+1)$ for some $\alpha > 0$.
\begin{assumption} \label{ass:Weights}
There exists $\eta \in (0,1)$ such that for all $i,j \in \{1,\ldots,m\}$ and all $k \geq 0$, $a_j^i(k) \geq 0$, $a_i^i(k) \geq \eta$, and $a_j^i(k) > 0$ implies that $a_j^i(k) \geq \eta$.
Moreover, for all $k \geq 0$,
\begin{enumerate}
\item $\sum_{j=1}^{m} a_j^i(k) = 1$ for all $i=1,\ldots,m$,
\item $\sum_{i=1}^{m} a_j^i(k) = 1$ for all $j=1,\ldots,m$.
\end{enumerate}
\end{assumption}
The interpretation of having a uniform lower bound $\eta$, independent of $k$, for the coefficients $a_j^i(k)$ in Assumption \ref{ass:Weights} is that it ensures that each agent is mixing information received by other agents at a non-diminishing rate as iterations progress, \cite{Nedic_etal_2010}. Moreover, points 1 and 2 ensure that this mixing is a convex combination of the other agent estimates and the local estimate, where a non-zero weight is assigned to this latter since $a_i^i(k) \geq \eta$.

For each $k \geq 0$ the information exchange between the $m$ agents can be represented by a directed graph $(N,E_k)$, where the nodes $N = \{1,\ldots,m\}$ are the agents and the set $E_k$ of directed edges is given by
$E_k = \big \{ (j,i):~ a_j^i(k) > 0 \big \}$, i.e., at time $k$ agent $i$ receives information (namely, the estimate $x_j(k)$) from agent $j$, and this information is weighted by $a_j^i(k)$.
Let $E_{\infty} = \big \{ (j,i):~ (j,i) \in E_k \text{ for infinitely many } k \big \}$ denote the set of edges $(j,i)$ that represent agent pairs that communicate directly infinitely often. The following connectivity and communication assumption is eventually enforced.
\begin{assumption} \label{ass:Network}
The graph $(N,E_{\infty})$ is strongly connected, i.e., for any two nodes there exists a path of directed edges that connects them. Moreover, there exists $\bar{k} \geq 1$ such that for every $(j,i) \in E_{\infty}$, agent $i$ receives information from a neighboring agent $j$ at least once every consecutive $\bar{k} $ iterations.
\end{assumption}

Assumption \ref{ass:Network} guarantees that any pair of agents communicates at least indirectly infinitely often, and the intercommunication interval is bounded. For further details the reader is referred to \cite{consensus_paper_2016,Nedic_Ozdaglar_2009}.

\subsubsection{Convergence properties} \label{sec:secIIIC2}

Problem $\mathcal{P}$ can be equivalently written as
\begin{align}
\min_{x \in \bigcap_{i=1}^{m}X_i} \sum_{i=1}^m g_i(x), \label{eq:P_equiv}
\end{align}
where, for all $i=1,\ldots,m$, and for any $x \in \mathbb{R}^n$,
\begin{align}
g_i(x) = \min_{u_i \in U_i(x)} f_i(x,u_i). \label{eq:inner_opt}
\end{align}
Note that for all $x \in X_i$ the minimum in \eqref{eq:inner_opt} exists due to the Weierstrass' theorem (Proposition A.8, p. 625 in \cite{Bertsekas_Tsitsiklis_1997}), since $U_i(x)$ is compact by Assumption \ref{ass:Compact} and $f_i(\cdot,\cdot)$ is continuous due to Assumption \ref{ass:Convex}.
We then have the following auxiliary lemmas, which are crucial for the proof of Theorem \ref{thm:convergence}.
\begin{lemma} \label{lm:convexity}
Under Assumptions \ref{ass:Convex} and \ref{ass:Compact}, it holds that $g_i(\cdot): \mathbb{R}^n \to \mathbb{R}$ is convex on $X_i$, for all $i=1,\ldots,m$.
\end{lemma}

\begin{lemma} \label{lm:Lip_continuity}
Under Assumptions \ref{ass:Convex}, \ref{ass:Compact}, and \ref{ass:Lipschitz}, it holds that $g_i(\cdot): \mathbb{R}^n \to \mathbb{R}$ is Lipschitz continuous on $X_i$, for all $i=1,\ldots,m$.
\end{lemma}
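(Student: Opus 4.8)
The plan is to establish the Lipschitz inequality $|g_i(x) - g_i(x')| \le K_i \|x - x'\|$ directly for all $x, x' \in X_i$, with an explicit constant built from the two Lipschitz constants already available. Let $M_i > 0$ denote the joint Lipschitz constant of $f_i$ granted by Assumption \ref{ass:Convex}, and let $L_i$ be the Lipschitz constant of the set-valued map $U_i(\cdot)$ from Assumption \ref{ass:Lipschitz}. Since the roles of $x$ and $x'$ are interchangeable, it suffices to bound $g_i(x') - g_i(x)$ from above; the reverse bound then follows by symmetry and the two together give the absolute-value estimate.

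The core step is a feasibility-transfer argument. Let $u_i^\star \in U_i(x)$ attain the inner minimum, so that $g_i(x) = f_i(x, u_i^\star)$; this minimizer exists by the Weierstrass argument already recorded after \eqref{eq:inner_opt}. The first point requiring care is to convert the Pompeiu--Hausdorff bound of Assumption \ref{ass:Lipschitz} into a usable one-sided statement, namely that $u_i^\star$ admits a near neighbor in $U_i(x')$. To this end I would substitute an arbitrary $u_i \in U_i(x)$ into the definition \eqref{eq:hausdorff}: then $\min_{v_i \in U_i(x)} \|u_i - v_i\| = 0$, so that $\dist(u_i, U_i(x')) = \min_{v_i' \in U_i(x')} \|u_i - v_i'\|$ is itself bounded by $d_H(U_i(x), U_i(x'))$, whence $\dist(u_i, U_i(x')) \le L_i \|x - x'\|$. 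Applying this to $u_i = u_i^\star$ and using compactness of $U_i(x')$ from Assumption \ref{ass:Compact} (so the infimum defining the distance is attained), I obtain a point $v_i \in U_i(x')$ with $\|u_i^\star - v_i\| \le L_i \|x - x'\|$.

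It remains to close the estimate via the joint Lipschitz continuity of $f_i$. Since $(x', v_i) \in V_i$, the point $v_i$ is feasible for the inner problem at $x'$, giving $g_i(x') \le f_i(x', v_i)$. Then
\begin{align}
g_i(x') - g_i(x) &\le f_i(x', v_i) - f_i(x, u_i^\star) \le M_i \big\| (x', v_i) - (x, u_i^\star) \big\| \nonumber \\
&= M_i \sqrt{\|x - x'\|^2 + \|u_i^\star - v_i\|^2} \le M_i \sqrt{1 + L_i^2}\, \|x - x'\|, \nonumber
\end{align}
where the last inequality uses the neighbor bound from the previous step. Interchanging $x$ and $x'$ yields the symmetric inequality, so $g_i$ is Lipschitz on $X_i$ with constant $K_i = M_i \sqrt{1 + L_i^2}$. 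I expect the only genuinely delicate point to be the one identified above, namely correctly reading off from the specific definition \eqref{eq:hausdorff} that every point of $U_i(x)$ has a neighbor in $U_i(x')$ within $L_i \|x - x'\|$ (and the appeal to compactness ensuring this neighbor actually exists); everything after that is a routine triangle-type estimate.
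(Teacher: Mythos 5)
Your proof is correct and follows essentially the same route as the paper's: both arguments read off from the Pompeiu--Hausdorff bound that the inner minimizer $u_i^\star \in U_i(x)$ has a neighbor in $U_i(x')$ within $L_i\|x-x'\|$ (by noting the first minimum in \eqref{eq:hausdorff} vanishes), then use feasibility of that neighbor together with the joint Lipschitz continuity of $f_i$ to bound the change in $g_i$. The only cosmetic differences are that you invoke symmetry rather than writing out both directions, and your Euclidean product-norm estimate gives the constant $M_i\sqrt{1+L_i^2}$ in place of the paper's $C_i(L_i+1)$.
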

The proofs of these technical lemmas are provided in the Appendix. \\

Consider now Algorithm 1, and, according to \eqref{eq:inner_opt}, re-write step 6 as
\begin{align}
x_i(k+1) = \arg\min_{x_i \in X_i} g_i(x_i) + \frac{1}{2 c(k)} ||\bar{x}_i(k) - x_i||^2. \label{eq:update_new}
\end{align}
Note that, since $g_i(\cdot)$ is convex on $X_i$ (Lemma \ref{lm:convexity}) and the quadratic penalty term in \eqref{eq:update_new} is strictly convex, $x_i(k+1)$ is univocally defined.

Based on this representation and building on Theorem 1 in \cite{consensus_paper_2016}, it can be deduced that Algorithm \ref{alg:Alg1} converges to a minimizer of $\mathcal{P}$. More precisely, it can be shown that there exists a minimizing global decision vector $x^*$ of $\mathcal{P}$ such that the values $\{ x_i(k) \}_{k\geq0}$ generated by Algorithm \ref{alg:Alg1} converge to $x^*$, for all $i=1,\ldots,m$ (i.e. agents reach consensus on the value of the global decision vector).
Moreover, though the local decision vector $\{ u_i(k) \}_{k\geq0}$, $i=1,\ldots,m$, generated by Algorithm \ref{alg:Alg1} may exhibit an oscillatory behavior, all their limit points will form together with $x^*$ a minimizer of $\mathcal{P}$.

This is formally stated in the following theorem, which is the main result of this section.

\begin{thm} \label{thm:convergence}
Let $\{x_i(k)\}_{k \geq 0}$, $\{u_i(k)\}_{k \geq 0}$, $i=1,\ldots,m$, be the sequences of esimates generated by Algorithm \ref{alg:Alg1}. Under Assumptions \ref{ass:Convex}-\ref{ass:Network}:
\begin{enumerate}
\item there exists a minimizing vector $x^*$ of $\mathcal{P}$, such that $\lim_{k \to \infty} || x_i(k) - x^*|| = 0$, for all $i=1,\ldots,m$;
\item any limit point $(u_1^*,\ldots,u_m^*)$ of $\{ (u_1(k),\ldots,u_m(k)) \}_{k \geq 0}$, is such that $(x^*,u_1^*,\ldots,u_m^*)$ is a minimizer of $\mathcal{P}$.
\end{enumerate}
\end{thm}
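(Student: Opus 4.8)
The plan is to reduce Theorem~\ref{thm:convergence} to the already-established convergence result for proximal-minimization consensus, namely Theorem~1 in \cite{consensus_paper_2016}, applied to the reduced single-variable problem \eqref{eq:P_equiv}. The starting observation is that, by \eqref{eq:update_new}, the iterates $x_i(k)$ produced by step~6 of Algorithm~\ref{alg:Alg1} coincide with the proximal-minimization consensus iterates for $\min_{x \in \bigcap_i X_i} \sum_i g_i(x)$, where $g_i$ is the inner value function defined in \eqref{eq:inner_opt}. Thus the first part of the theorem follows as soon as I verify that this reduced problem meets every hypothesis required by the cited theorem.

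First I would check these hypotheses one by one. Convexity of each $g_i$ on $X_i$ is exactly Lemma~\ref{lm:convexity}, and Lipschitz continuity of each $g_i$ on $X_i$ is Lemma~\ref{lm:Lip_continuity}; these two lemmas are precisely what is needed to play the role of the standing convexity and Lipschitz assumptions on the objective in \cite{consensus_paper_2016}. Compactness and convexity of the local constraint sets $X_i$, together with the nonempty interior of $\bigcap_i X_i$, follow from Assumption~\ref{ass:Compact} (as recorded in the discussion after \eqref{eq:con_x}). The diminishing step-size conditions are Assumption~\ref{ass:ConvCoef}, the doubly-stochastic weight conditions with a uniform lower bound are Assumption~\ref{ass:Weights}, and the connectivity and bounded-intercommunication conditions are Assumption~\ref{ass:Network}. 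With all hypotheses in force, Theorem~1 of \cite{consensus_paper_2016} yields a minimizer $x^*$ of \eqref{eq:P_equiv} with $\lim_{k\to\infty} \|x_i(k) - x^*\| = 0$ for every $i$. Since $\mathcal{P}$ and \eqref{eq:P_equiv} are equivalent, $x^*$ is the global component of a minimizer of $\mathcal{P}$, which proves part~1.

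For part~2, I would argue on the local variables directly. Because $V_i$ is compact (Assumption~\ref{ass:Compact}), the joint sequence $\{(u_1(k),\ldots,u_m(k))\}_{k\geq 0}$ is bounded and admits limit points; fix one, $(u_1^*,\ldots,u_m^*)$, and a common subsequence $k_\ell$ along which $u_i(k_\ell) \to u_i^*$ for all $i$. Feasibility of the limit follows from closedness of $V_i$: since $(x_i(k_\ell),u_i(k_\ell)) \in V_i$ and $x_i(k_\ell) \to x^*$, the limit satisfies $(x^*,u_i^*) \in V_i$, i.e.\ $u_i^* \in U_i(x^*)$. For optimality, note that by construction of step~6 the pair $(x_i(k+1),u_i(k+1))$ attains the inner minimum, so $f_i(x_i(k+1),u_i(k+1)) = g_i(x_i(k+1))$. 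Passing to the limit along $k_\ell$, using joint continuity of $f_i$ (Assumption~\ref{ass:Convex}) on the left and continuity of $g_i$ (Lemma~\ref{lm:Lip_continuity}) on the right, gives $f_i(x^*,u_i^*) = g_i(x^*)$; summing over $i$ shows $\sum_i f_i(x^*,u_i^*) = \sum_i g_i(x^*)$, which equals the optimal value of \eqref{eq:P_equiv} and hence of $\mathcal{P}$. Together with feasibility this makes $(x^*,u_1^*,\ldots,u_m^*)$ a minimizer of $\mathcal{P}$.

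The main obstacle is not the limit-point bookkeeping of part~2 but the justification that the reduced update \eqref{eq:update_new} genuinely falls within the scope of \cite{consensus_paper_2016}: the cited theorem is stated for agents whose objectives $g_i$ are given a priori, whereas here each $g_i$ is itself a parametric value function whose regularity --- convexity and, crucially, the Lipschitz property that controls how the inner optimizer moves with $x$ --- must first be established. This is exactly where Assumption~\ref{ass:Lipschitz} on the set-valued map $U_i(\cdot)$ enters, through Lemma~\ref{lm:Lip_continuity}. Once the two lemmas are in hand, the remaining work is a careful but routine check that no additional assumption of the cited theorem is violated, so that the existing convergence machinery transfers verbatim to the structured setting.
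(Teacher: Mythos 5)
Your proposal is correct and follows essentially the same route as the paper: part~1 is proved by verifying, via Lemmas~\ref{lm:convexity} and \ref{lm:Lip_continuity} and Assumptions~\ref{ass:Compact}, \ref{ass:ConvCoef}--\ref{ass:Network}, that the reduced problem \eqref{eq:P_equiv} with iterates \eqref{eq:update_new} satisfies the hypotheses of Theorem~1 in \cite{consensus_paper_2016}, and part~2 uses compactness of $V_i$ for existence and feasibility of limit points together with the identity $f_i(x_i(k),u_i(k)) = g_i(x_i(k))$ and continuity of $f_i$ and $g_i$. Your direct passage to the limit along a subsequence in part~2 is just a cosmetic variant of the paper's $\epsilon$-argument (itself modeled on Theorem~1.17(b) of \cite{Rockafellar-Wets_2009}); the substance is identical.
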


\begin{proof}
Consider Algorithm \ref{alg:Alg1} with step 6 rewritten as in \eqref{eq:update_new}. Thanks to Assumptions \ref{ass:Convex}-\ref{ass:Lipschitz} and thanks to Lemmas \ref{lm:convexity} and \ref{lm:Lip_continuity} it holds that
\begin{enumerate}[i)]
\item $X_i$ is convex and compact, for all $i=1,\ldots,m$,
\item $\bigcap_{i=1}^m X_i$ has non-empty interior,
\item $g_i(\cdot): \mathbb{R}^n \to \mathbb{R}$ is convex on $X_i$, for all $i=1,\ldots,m$,
\item $g_i(\cdot): \mathbb{R}^n \to \mathbb{R}$ is Lipschitz continuous on $X_i$, for all $i=1,\ldots,m$.
\end{enumerate}
Under these conditions\footnote{Actually, in \cite{consensus_paper_2016}, condition iv) is not imposed, but it is assumed that $g_i(\cdot)$ is convex on $\mathbb{R}^n$, and not only on $X_i$ as in condition iii). Convexity over the whole $\mathbb{R}^n$, together with the compactness condition in i), implies iv). Conditions iii) and iv) constitute a weaker set of assumptions, which, however, leave the conclusions of Theorem 1 in \cite{consensus_paper_2016} unaltered (see also discussion below Assumption 3 in \cite{consensus_paper_2016}).} and under Assumptions \ref{ass:ConvCoef}-\ref{ass:Network}, Theorem 1 in \cite{consensus_paper_2016} applies, yielding that there exists a minimizer $x^*$ of \eqref{eq:P_equiv}, such that $\lim_{k \to \infty} || x_i(k) - x^*|| = 0$, for all $i=1,\ldots,m$. By the equivalence between problem $\mathcal{P}$ and \eqref{eq:P_equiv}, $x^*$ is also the $x$-component of the minimizer of $\mathcal{P}$, and hence this concludes the proof for the first part of the theorem.

The second part follows along lines akin to the proof of point (b) of Theorem 1.17 in \cite{Rockafellar-Wets_2009}. Specifically, let $(u_1^*,\ldots,u_m^*)$ be any limit point of the sequence $\{ (u_1(k),\ldots,u_m(k)) \}_{k \geq 0}$, which exists thanks to the compactness Assumption \ref{ass:Compact}. Thanks to Assumption \ref{ass:Compact} it also holds that $(x^*,u_1^*,\ldots,u_m^*)$ is feasible for $\mathcal{P}$. Given the definition of $g_i$ and that of $u_i(k)$, recalling that $\lim_{k \to \infty} || x_i(k) - x^*|| = 0$ for all $i=1,\ldots,m$, and thanks to the continuity of $g_i(\cdot)$ as assured by Lemma \ref{lm:Lip_continuity}, for any given $\epsilon > 0$ it holds that
$$
\sum_{i=1}^m f_i(x_i(k),u_i(k)) = \sum_{i=1}^m g_i(x_i(k)) \leq \sum_{i=1}^m g_i(x^*) + \epsilon
$$
for $k$ large enough. This in turn implies that $\sum_{i=1}^m f_i(x^*,u_i^*) \leq \sum_{i=1}^m g_i(x^*) + \epsilon$. Being $\epsilon$ arbitrary, it follows that $\sum_{i=1}^m f_i(x^*,u_i^*) \leq \sum_{i=1}^m g_i(x^*)$, which, given the equivalence between problem $\mathcal{P}$ and \eqref{eq:P_equiv}, shows that $(x^*,u_1^*,\ldots,u_m^*)$ is a minimizer of $\mathcal{P}$.
\end{proof}

\subsection{Satisfaction of algorithm assumptions} \label{sec:secIIID}
The energy management problem of Section \ref{sec:secIIIA} is a convex minimization program. It can be also easily verified that its objective function is Lipschitz continuous with respect to all decision variables, thus satisfying Assumption \ref{ass:Convex}. Assumption \ref{ass:Compact} is also satisfied, as an effect of the physical and technological constraints imposed in
\eqref{eq:ch_energyReq}-\eqref{eq:init_storage}. Even if this were not the case, all numerical calculations are performed on compact domains, hence satisfaction of Assumption \ref{ass:Compact} is not an issue.

Assumptions \ref{ass:ConvCoef} and \ref{ass:Weights} imply that Algorithm \ref{alg:Alg1} is synchronous, and buildings need to agree prior to the execution of the algorithm on $\{c(\cdot)\}_{k \geq 0}$ and the weight coefficients $\{a^i_j(k)\}_{k \geq 0}$, $i,j = 1,\ldots,m$. For every iteration $k$, these weights should form a doubly stochastic matrix. A distributed methodology to construct doubly stochastic matrices can be found in \cite{Sinkhorn_Knopp_1967}; however, this is outside the scope of the current paper.
Assumption \ref{ass:Network} is standard in distributed optimization algorithms over networks, and is satisfied for a wide class of time-varying network structures. In particular, periodic absence of communication links, as in the case study of Section \ref{sec:secIV}, falls in the proposed framework.

Even though it is relatively straightforward to verify Assumptions \ref{ass:Convex}, \ref{ass:Compact}, and \ref{ass:ConvCoef}-\ref{ass:Network}, and they are satisfied for the case study of Section \ref{sec:secIV}, it is in general difficult to verify Assumption \ref{ass:Lipschitz}. This is due to the fact that existence of a uniform Lipschitz constant, such that the set-valued continuity condition \eqref{eq:Lipschitz_set} is satisfied, is hard to verify even numerically. However, applying Algorithm \ref{alg:Alg1} to the case study of Section \ref{sec:secIV} we verified numerically that the assertions of Theorem \ref{thm:convergence} are valid, even though we were not able to verify satisfaction of Assumption \ref{ass:Lipschitz}.

\section{Case study} \label{sec:secIV}
\subsection{Simulation set-up} \label{sec:secIVA}
Consider a network of $m=3$, identical, three-storey buildings, each one with a $20$m by $20$m base, a total height of $9$m, flat rooftop and half glazed lateral surfaces. Each building is divided into $n_z = 3$ thermal zones (one per floor) and is equipped with its own chiller, namely, building $1$ has a medium-size chiller, building $2$ a small one, and building $3$ a large one.
\begin{figure}[t!]
	\centering
	\includegraphics[width=\columnwidth]{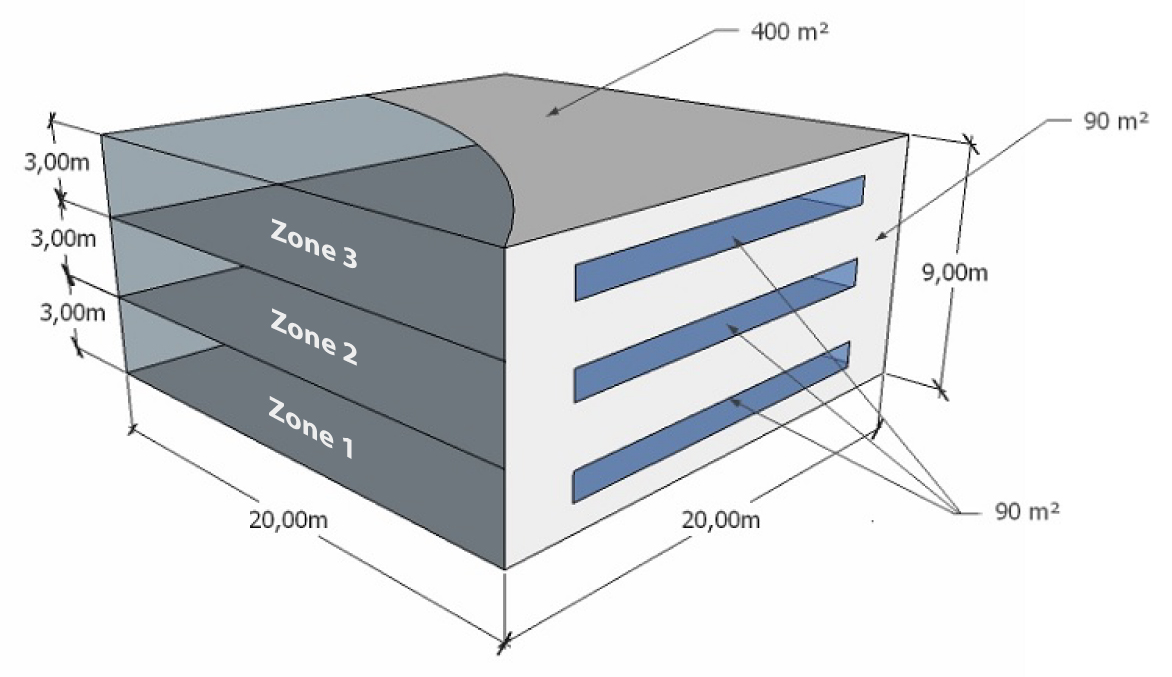}
	\caption{Structure of each building.}
	\label{fig:building}
\end{figure}

The structure of the buildings is schematically illustrated in Figure~\ref{fig:building} and the COP curves as a function of the cooling energy request $E_{\mathrm{chiller},c}$ are shown in Figure~\ref{fig:chillers}. The parameters of the biquadratic approximations (see Section \ref{sec:secIIB}) and the maximal cooling energy request are reported in Table~\ref{tab:chillers}, where the index $i$, $i=1,\ldots,3$, dictates the building they correspond to.
\begin{figure}[t!]
	\centering
	\includegraphics[width=\columnwidth]{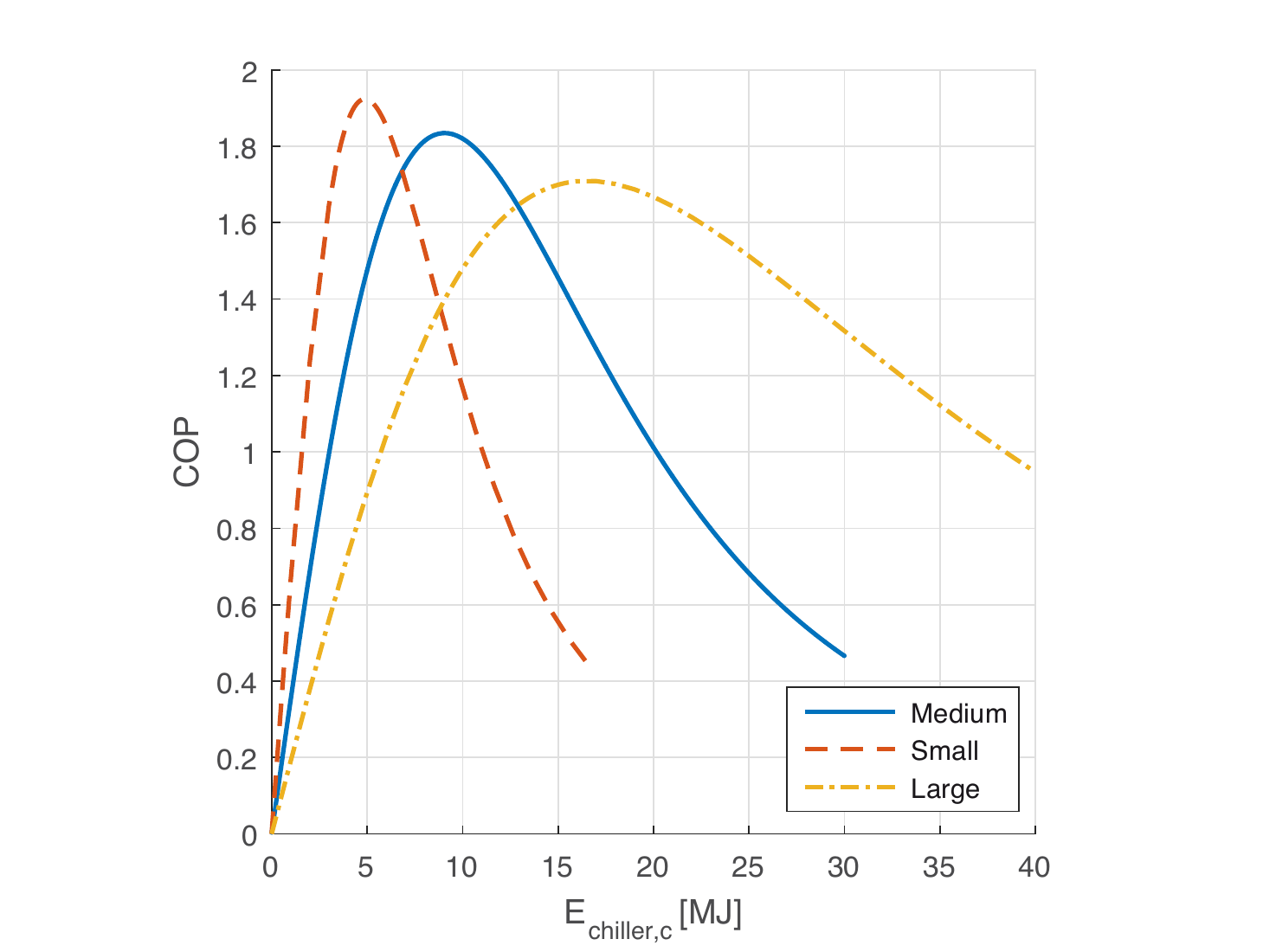}
	\caption{Chiller COP curves for each building.}
	\label{fig:chillers}
\end{figure}
\begin{table}[t!]
	\centering
	\begin{tabular}{cccccc}
		\hline \\[-0.85em]
		$i$ & Size         & $c_2^i$              & $c_1^i$              & $c_0^i$ & $E_{\max}^i$ {[}MJ{]} \\[0.15em]
		\hline \\[-0.85em]
		$1$ & Medium       & $3.79 \cdot 10^{-5}$ & $2.77 \cdot 10^{-2}$ & $2.46$  & $30$                  \\
		$2$ & Small        & $2.49 \cdot 10^{-4}$ & $4.98 \cdot 10^{-2}$ & $1.26$  & $18$                  \\
		$3$ & Large        & $3.56 \cdot 10^{-6}$ & $1.58 \cdot 10^{-2}$ & $5.11$  & $40$                  \\
		\hline \\
	\end{tabular}
	\caption{Chiller coefficients and maximal cooling energy.}
	\label{tab:chillers}
\end{table}

\begin{figure}[t!]
	\centering
	\includegraphics[width=\columnwidth]{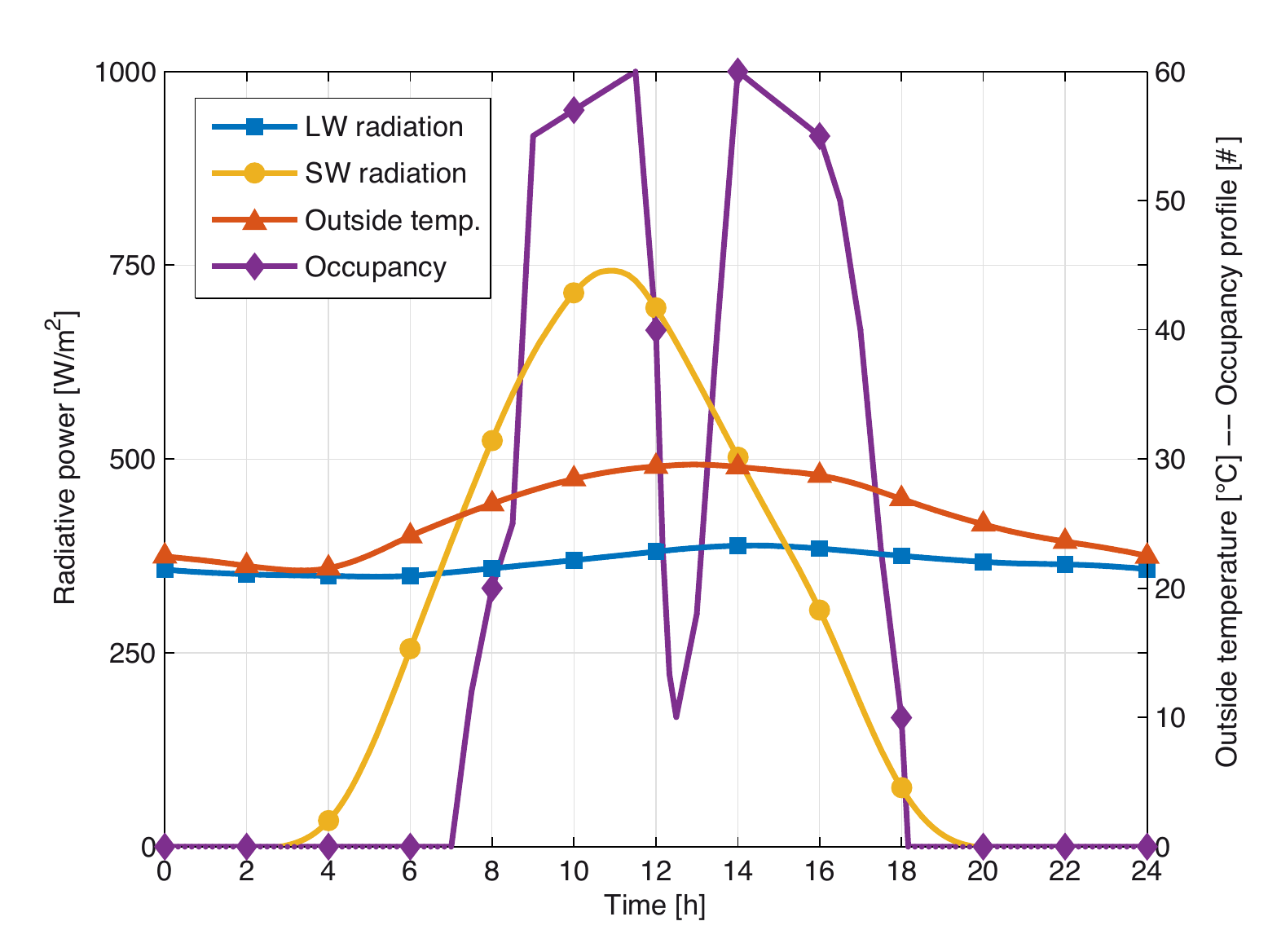}
	\caption{Disturbance profiles: Longwave (LW) and shortwave (SW) solar radiation, outdoor temperature and occupancy.}
	\label{fig:disturbances}
\end{figure}

\begin{figure}[t!]
	\centering
	\begin{tikzpicture}
		\def\RAD{0.3}
		\def\DIST{1.5}
		\def\ANGLE{360/3}
		
		\coordinate (O) at (0,0);
		
		\coordinate (N1) at ($(O)+(90+0*\ANGLE:\DIST)$);
		\coordinate (N2) at ($(O)+(90+1*\ANGLE:\DIST)$);
		\coordinate (N3) at ($(O)+(90+2*\ANGLE:\DIST)$);
		
		\draw[thick] (N1) -- node[xshift=-0.75em,yshift=0.25em] {$\frac{1}{3}$} (N2);
		\draw[thick] (N1) -- node[xshift=+0.75em,yshift=0.25em] {$\frac{1}{3}$} (N3);
		
		\filldraw[fill=white] (N1) circle (\RAD) node {$1$};
		\filldraw[fill=white] (N2) circle (\RAD) node {$2$};
		\filldraw[fill=white] (N3) circle (\RAD) node {$3$};
		
		\coordinate (O_2) at (4.25,0);
		
		\coordinate (N1_2) at ($(O_2)+(90+0*\ANGLE:\DIST)$);
		\coordinate (N2_2) at ($(O_2)+(90+1*\ANGLE:\DIST)$);
		\coordinate (N3_2) at ($(O_2)+(90+2*\ANGLE:\DIST)$);
		
		\draw[thick,blue]
			(N1_2) -- node[xshift=-0.75em,yshift=0.25em,black] {$\frac{1}{2}$} (N2_2);
		\draw[thick,red,style={decorate, decoration={snake, segment length=7, amplitude=2}}]
			(N2_2) -- node[xshift=0em,yshift=-1em,black] {$\frac{1}{2}$} (N3_2);
		\draw[thick,green!75!black,style={decorate, decoration={coil, aspect=1.25, segment length=5.6, amplitude=2}}]
			(N1_2) -- node[xshift=+0.75em,yshift=0.25em,black] {$\frac{1}{2}$} (N3_2);
		
		\filldraw[fill=white] (N1_2) circle (\RAD) node {$1$};
		\filldraw[fill=white] (N2_2) circle (\RAD) node {$2$};
		\filldraw[fill=white] (N3_2) circle (\RAD) node {$3$};
	\end{tikzpicture}
	
	\caption{Communication structure: Fixed (left panel) and time-varying (right panel).}
	\label{fig:communication_protocol}
\end{figure}
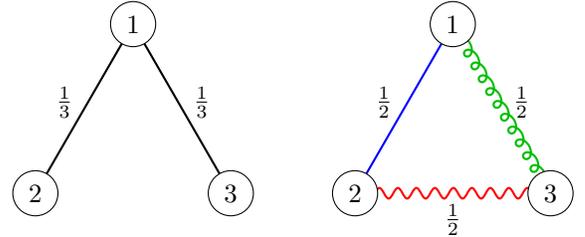

The external disturbances affecting the buildings are reported in Figure~\ref{fig:disturbances}. Longwave and shortwave solar radiation are depicted with square and circle markers respectively. The outside temperature and the occupancy are plotted with triangles and diamonds respectively. Note that the three buildings are supposed to be subject to the same disturbance profiles, and the occupancy shall be intended per building and equally partitioned among the zones. The period in which the occupancy is greater than zero is referred to as ``occupancy period'' and it is within the ``working hours'' range $7$AM to $6$PM. In all buildings, temperature constraints are set to $\widetilde{T}_{\min}^i = 20^\circ$C and $\widetilde{T}_{\max}^i = 24^\circ$C during working hours and to $\widetilde{T}_{\min}^i = 16^\circ$C and $\widetilde{T}_{\max}^i = 30^\circ$C otherwise.

For the control problem, we considered a time horizon of $24$ hours discretized in $n_t = 144$ time slots of $\dt = 10$min each. We tested the proposed algorithm with two different types of bi-directional communication topologies. The first one (Figure~\ref{fig:communication_protocol}, left panel) is a connected topology, in which buildings $1$ and $3$ exchange information only with building $2$ but not with each other, and the communication scheme is kept fixed across iterations. The second one (Figure~\ref{fig:communication_protocol}, right panel) is a time-varying periodic topology in which, at each iteration $k$, only two buildings communicate. The order in which the links are activated within the period is the following: $(1,2)$ (blue straight), $(2,3)$ (red wavy), and $(1,3)$ (green spring). In Figure~\ref{fig:communication_protocol} we also report the coefficients $a_j^i(k)$ for $j\neq i$ near the corresponding edge $(i,j)$. The $a_i^i(k)$ coefficients, $i=1,\dots,m$, are not reported but they can be easily retrieved so that Assumption~\ref{ass:Weights} is satisfied.

\subsection{Simulation results} \label{sec:secIVB}
We applied Algorithm~\ref{alg:Alg1} to the two communication structures and in both cases the proposed distributed approach was able to retrieve the optimal solution.

Figure~\ref{fig:final_temp} shows the optimal temperature profiles for the three zones of building $1$. It can be observed that, while the profiles of zones $1$ and $2$ are kept close to the maximum temperature bound of the working hours comfort range (outside the grey area), the temperature of zone $2$ is always lower than the other two. Zone $2$ is indeed subject to a pre-cooling phase before the occupancy period so as to cool down the building, acting as an additional passive thermal storage to drain the heat of the other zones through floor and ceiling. The temperature profiles of the other two buildings are very similar to that of building $1$, and hence are not reported here.
\begin{figure}[t!]
	\centering
	\includegraphics[width=\columnwidth]{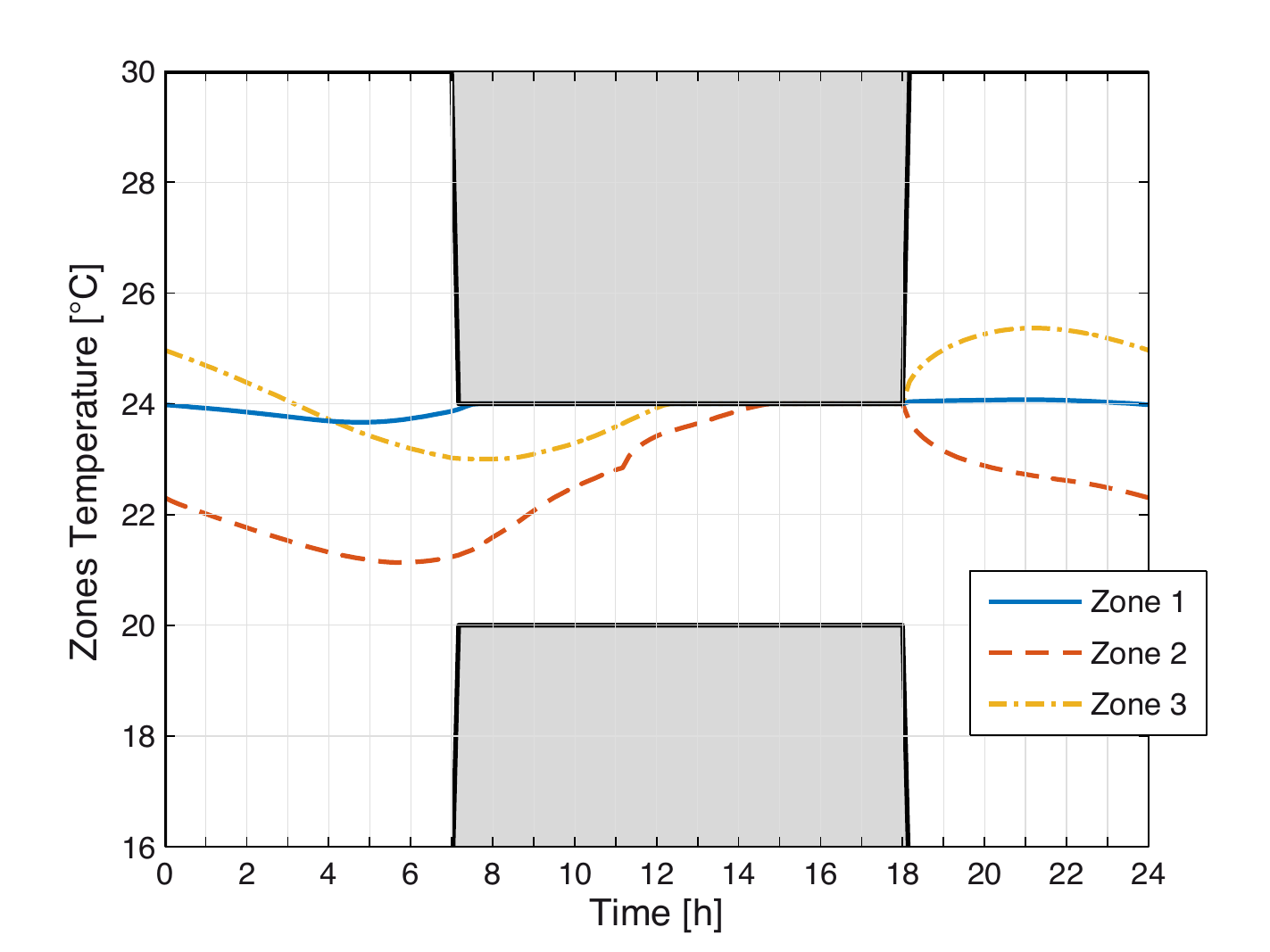}
	\caption{Optimal zone temperature profiles of building $1$. The temperature of zone $2$ (at the middle) is always lower than the other two, since it acts as a passive thermal storage to drain the heat of the other zones through floor and ceiling.}
	\label{fig:final_temp}
\end{figure}

In Figures~\ref{fig:init_storage} and \ref{fig:final_storage} we report the storage profiles of building $1$ at iteration $k=1$ and at consensus (when Algorithm \ref{alg:Alg1} converges), respectively. From Figure~\ref{fig:init_storage} it is clear that, at the beginning, building $1$ acts in a ``selfish'' manner and its optimal strategy is to constantly withdraw cooling energy from the storage ($e_s^1 > 0$, solid line), thus forcing buildings $2$ and $3$ to charge the storage ($e_s^2 < 0$ and $e_s^3 < 0$, dashed and dot-dashed lines, respectively). The stored energy is shown with the black dotted line. The consensus solution depicted in Figure~\ref{fig:final_storage} is instead cooperative. Building $3$, which has the biggest chiller, is constantly providing cooling energy ($e_s^3 < 0$) to the shared storage; building $2$, which has the smallest chiller, is constantly withdrawing energy ($e_s^2 > 0$) from it; and building $1$ provides/retrieves energy to/from the storage depending on the time slot. In this way, differences in the chiller sizes are compensated through the storage.
\begin{figure}[t!]
	\centering
	\includegraphics[width=\columnwidth]{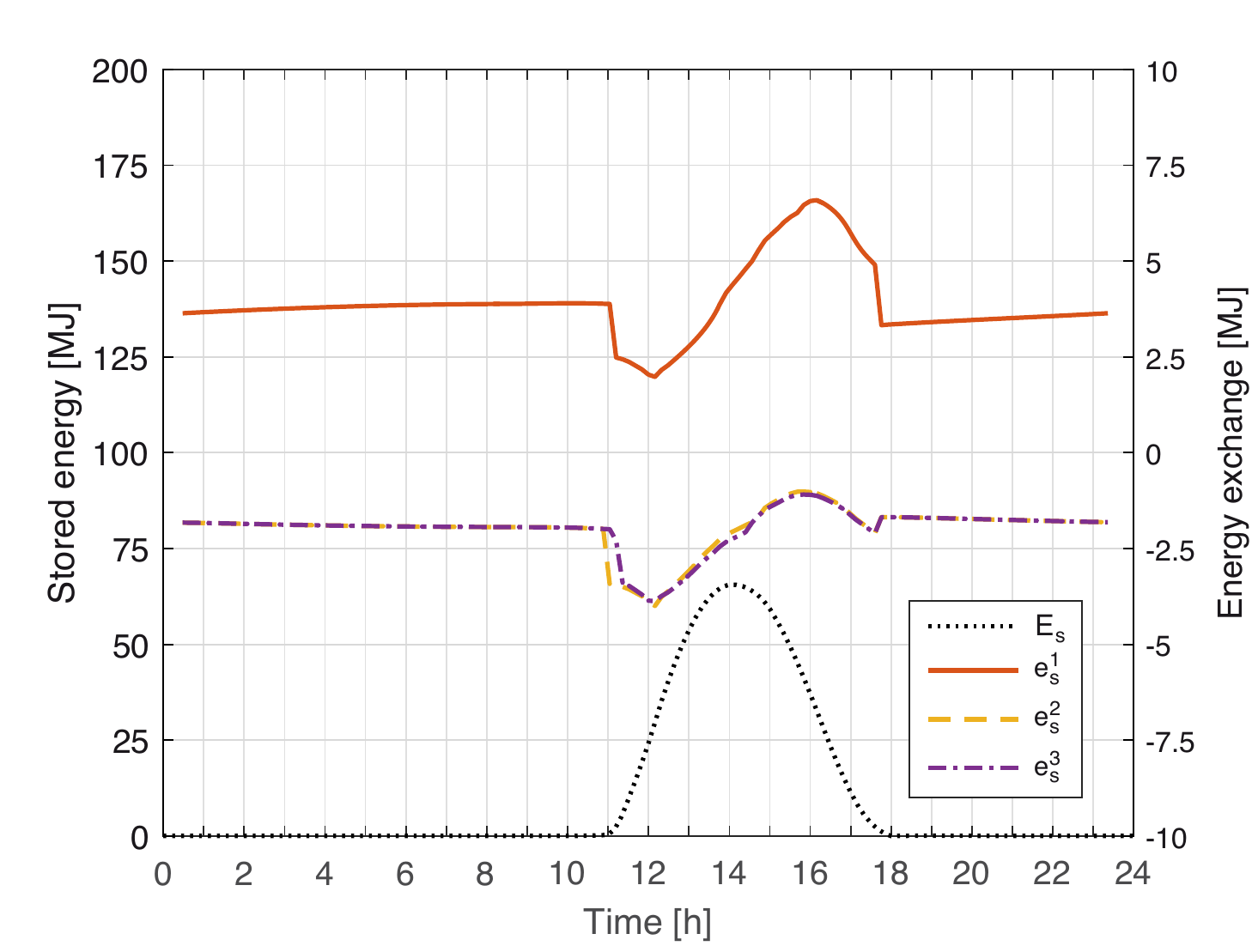}
	\caption{Storage profiles at iteration $k=1$. Building $1$ acts in a ``selfish''manner and its optimal strategy is to constantly withdraw cooling energy from the storage ($e_s^1 > 0$, solid line), thus forcing buildings $2$ and $3$ to charge the storage ($e_s^2 < 0$ and $e_s^3 < 0$, dashed and dot-dashed lines, respectively). The stored energy is shown with the black dotted line.}
	\label{fig:init_storage}
\end{figure}
\begin{figure}[t!]
	\centering
	\includegraphics[width=\columnwidth]{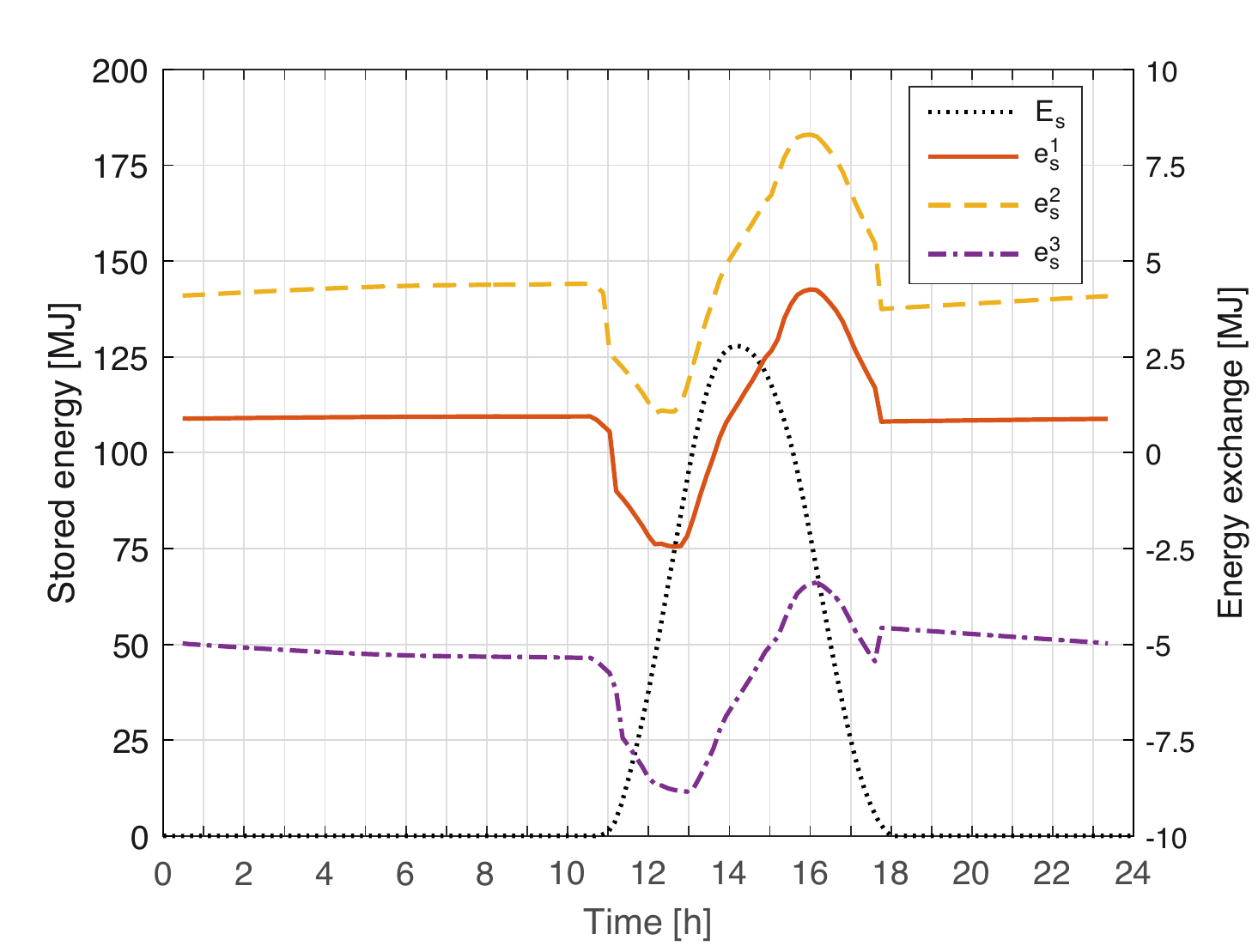}
	\caption{Storage profiles at consensus. Cooperative solution, with building $3$, which has the biggest chiller, is constantly providing cooling energy ($e_s^3 < 0$) to the shared storage; building $2$, which has the smallest chiller, is constantly withdrawing energy ($e_s^2 > 0$) from it; and building $1$ provides/retrieves energy to/from the storage depending on the time slot. The stored energy is shown with the black dotted line.}
	\label{fig:final_storage}
\end{figure}

Figure~\ref{fig:init_COP} and \ref{fig:final_COP} show the COP coefficient of the chillers of the three buildings (resulting from the optimization of building $1$) at $k=1$ and at consensus, respectively. In Figure~\ref{fig:init_COP} building $1$ is clearly optimizing the efficiency of its own chiller disregarding completely the efficiency of the other two, whereas the consensus solution reported in Figure~\ref{fig:final_COP} shows that the efficiency of the two other chillers is increased significantly at the expense of a slight deterioration in the one of building $1$, thus resulting in an overall benefit for the building district.
\begin{figure}[t!]
	\centering
	\includegraphics[width=\columnwidth]{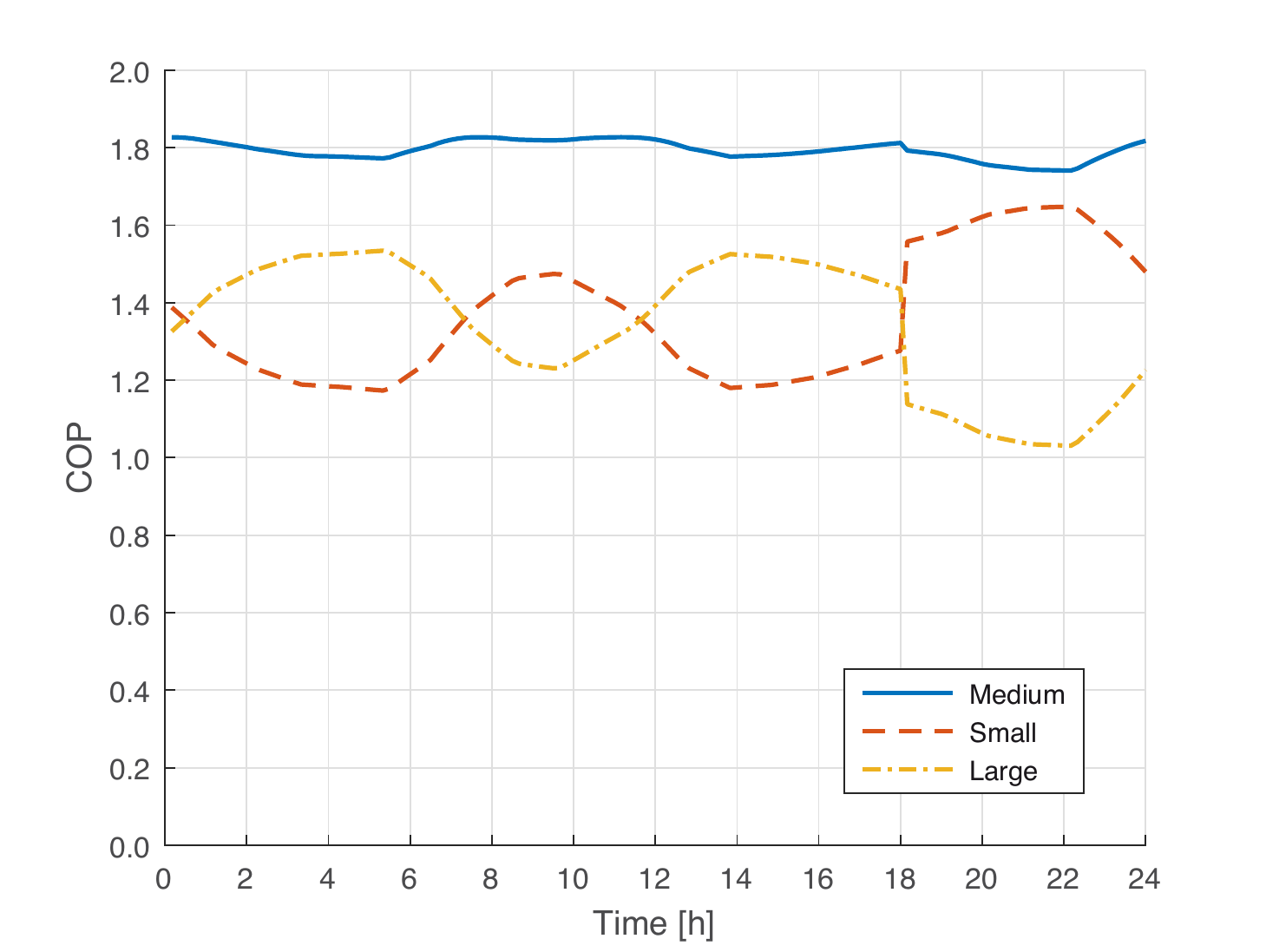}
	\caption{COP profiles at iteration $k=1$. Building $1$ is clearly optimizing the efficiency of its own chiller disregarding completely the efficiency of the other two.}
	\label{fig:init_COP}
\end{figure}
\begin{figure}[t!]
	\centering
	\includegraphics[width=\columnwidth]{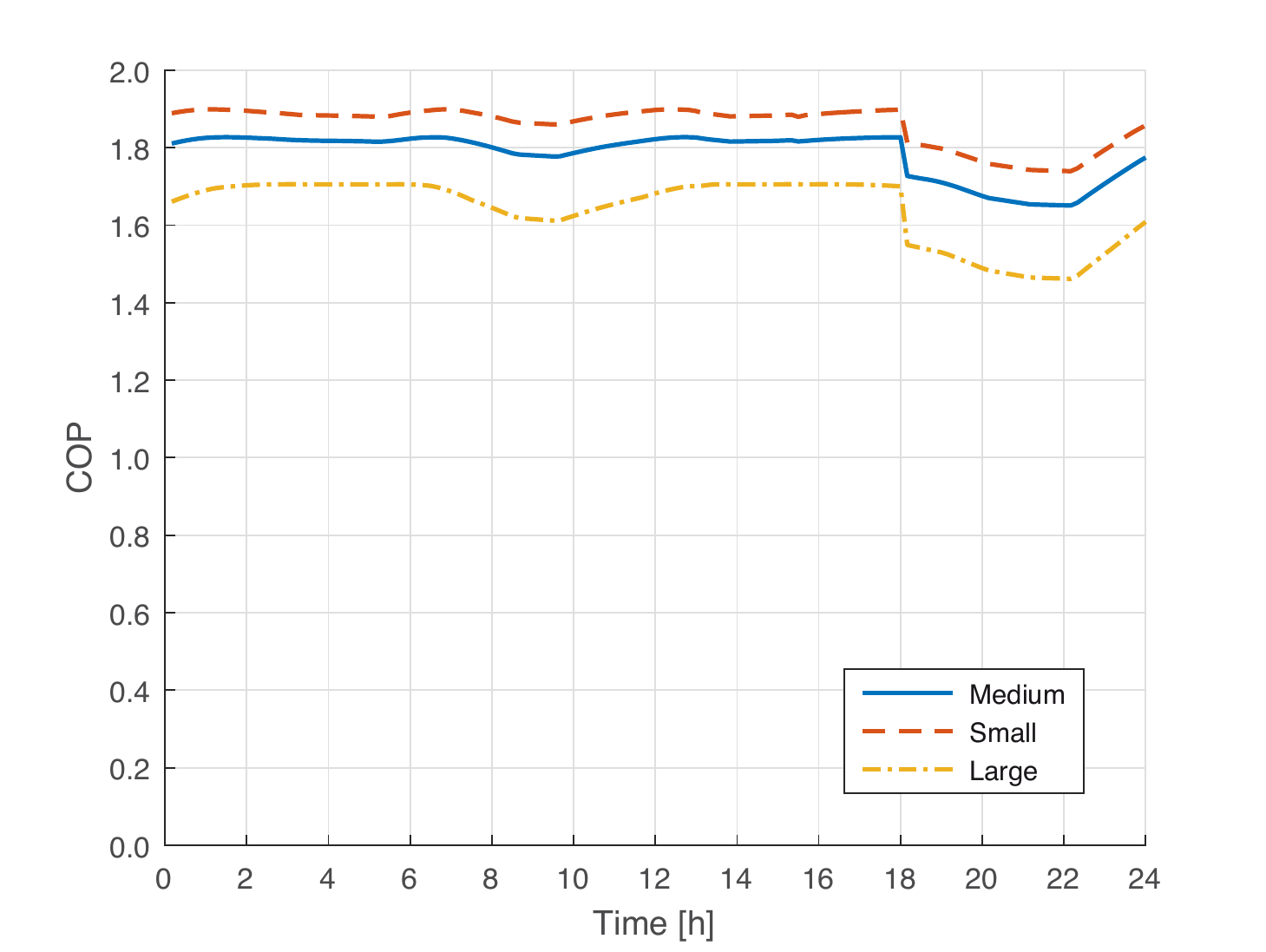}
	\caption{COP profiles at consensus. The efficiency of the chillers of buildings $2$ and $3$ is increased significantly at the expense of a slight deterioration in the one of building $1$, thus resulting in an overall benefit for the building district.}
	\label{fig:final_COP}
\end{figure}

The number of iterations needed to achieve consensus are $278$ for the fixed topology and $1032$ for the time-varying topology, where we considered the solution to be at consensus if either the absolute or the relative difference between the solutions of the agents across two consecutive iterations was less than a given threshold, which was taken to be $10^{-3}$.

\section{Concluding remarks} \label{sec:secV}
In this paper we proposed a distributed algorithm for energy management in buildings connected over time-varying networks, sharing common resources like storage. The proposed scheme does not require for buildings to reveal information that is considered as private, and overcomes the communication and computational challenges imposed by centralized or decentralized management paradigms.
In particular, a proximal minimization based approach was adopted, and a theoretical extension to an algorithm that recently appeared in the literature was provided. The efficacy of the proposed energy management algorithm was illustrated by means of a detailed simulation based study.

Current work concentrates on extending the proposed energy management scheme to take uncertainty due to renewable energy generation and/or occupancy into account. This would require extending our distributed algorithm to the stochastic case; preliminary theoretical results towards this direction, employing a scenario based approach, can be found in \cite{consensus_paper_2016}.

\section*{Appendix} \label{sec:secApp}

\begin{proofof}{Lemma \ref{lm:convexity}}
For each $i = 1,\ldots,m$, fix any $x, x' \in X_i$ and $\lambda \in [0,1]$.
By \eqref{eq:inner_opt}, let
\begin{align}
u_i^*(x) &\in \arg \min_{u_i \in U_i(x)} f_i(x,u_i), \label{eq:point_u1} \\
u_i^*(x') &\in \arg \min_{u_i \in U_i(x')} f_i(x',u_i). \label{eq:point_u2}
\end{align}
Note that the existence of such minimizers is guaranteed by Weierstrass' theorem (Proposition A.8, p. 625 in \cite{Bertsekas_Tsitsiklis_1997}), since $U_i(x), U_i(x')$ are compact and non-empty (Assumption \ref{ass:Compact}), and $f_i(\cdot,\cdot)$ is continuous (Assumption \ref{ass:Convex}).

Since $u_i^*(x) \in U_i(x)$ and $u_i^*(x') \in U_i(x')$, we have that $(x,u_i^*(x)) \in V_i$ and $(x',u_i^*(x')) \in V_i$, which, given the convexity of $V_i$ (Assumption \ref{ass:Compact}), implies that
\begin{align}
\big ( \lambda x + (1-\lambda) x', \lambda u_i^*(x) + (1-\lambda) u_i^*(x') \big ) \in V_i. \label{eq:conv_Vi}
\end{align}
This also implies that $\lambda u_i^*(x) + (1-\lambda) u_i^*(x') \in U_i( \lambda x + (1-\lambda) x' )$ (see \eqref{eq:con_u}).

We then have
\begin{align}
g_i( \lambda x  & + (1-\lambda) x' ) \nonumber \\
& = \min_{u_i \in U_i( \lambda x + (1-\lambda) x' )} f_i( \lambda x + (1-\lambda) x',u_i) \nonumber \\
& \leq f_i( \lambda x + (1-\lambda) x',\lambda u_i^*(x) + (1-\lambda) u_i^*(x') ) \nonumber \\
& \leq \lambda f_i(x,u_i^*(x)) + (1-\lambda) f_i(x',u_i^*(x')) \nonumber \\
& = \lambda g_i(x) + (1-\lambda) g_i(x'), \label{eq:convex_pf}
\end{align}
where the first inequality follows because $\lambda u_i^*(x) + (1-\lambda) u_i^*(x') \in U_i( \lambda x + (1-\lambda) x' )$ and the definition of $\min$, the second inequality because $f_i(\cdot,\cdot)$ is jointly convex with respect to its arguments (Assumption \ref{ass:Convex}), whereas the last equality because \eqref{eq:point_u1}, \eqref{eq:point_u2} and the definition of $g_i(\cdot)$ in \eqref{eq:inner_opt}. Since \eqref{eq:convex_pf} holds for any $x, x' \in X_i$, and for any $\lambda \in [0,1]$, the convexity of $g_i(\cdot)$ on $X_i$ remains proven.
\end{proofof}

\begin{proofof}{Lemma \ref{lm:Lip_continuity}}
The proof of is inspired by the proof of Corollary 3.5 of \cite{Wets_2003}.

For each $i=1,\ldots,m$, fix any $x, x' \in X_i$. Let also $u_i^*(x) \in U_i(x)$, $u_i^*(x') \in U_i(x')$, be as in \eqref{eq:point_u1} and \eqref{eq:point_u2}, respectively.
By Assumption \ref{ass:Lipschitz}, we have for all $u_i \in \mathbb{R}^{n_i}$ that
\begin{align}
\big | \min_{v_i \in U_i(x)} ||u_i - v_i|| -  \min_{v'_i \in U_i(x')} ||u_i - &v'_i|| \big | \nonumber \\ &\leq L_i ||x - x'||. \label{eq:hausdorff_pf1}
\end{align}
Take $u_i = u_i^*(x)$. We then have that
\begin{align}
\min_{v'_i \in U_i(x')} ||u_i^*(&x) - v'_i|| \nonumber \\
&\leq \min_{v_i \in U_i(x)} ||u_i^*(x) - v_i|| + L_i ||x - x'|| \nonumber \\
& \leq L_i ||x - x'||, \label{eq:hausdorff_pf2}
\end{align}
where the last inequality holds true because $u_i^*(x) \in U_i(x)$. Letting $\bar{v}'_i \in \arg \min_{v'_i \in U_i(x')} ||u_i^*(x) - v'_i||$, \eqref{eq:hausdorff_pf2} is equivalent to
\begin{align}
||u_i^*(x) - \bar{v}'_i|| \leq L_i ||x - x'||. \label{eq:hausdorff_pf5}
\end{align}

Similarly, taking $u_i = u_i^*(x')$ in \eqref{eq:hausdorff_pf1} gives that
\begin{align}
\min_{v_i \in U_i(x)} ||u_i^*(x') - v_i|| \leq L_i ||x - x'||, \label{eq:hausdorff_pf6}
\end{align}
which, letting $\bar{v}_i \in \arg \min_{v_i \in U_i(x)} ||u_i^*(x') - v_i||$ is equivalent to
\begin{align}
||u_i^*(x') - \bar{v}_i|| \leq L_i ||x - x'||. \label{eq:hausdorff_pf7}
\end{align}
Note that $\bar{v}_i, \bar{v}_i'$, exist due to the Weierstrass' theorem (Proposition A.8, p. 625 in \cite{Bertsekas_Tsitsiklis_1997}), since $U_i(x)$, $U_i(x')$ are compact and non-empty due to Assumption \ref{ass:Compact}, and since $||u_i^*(x') - v_i||$ and $||u_i^*(x) - v'_i||$ are continuous with respect to $v_i$ and $v'_i$, respectively.

By Assumption \ref{ass:Convex}, $f_i(\cdot,\cdot): \mathbb{R}^{n} \times \mathbb{R}^{n_i} \to \mathbb{R}$ is Lipschitz continuous. Denoting its Lipschitz constant by $C_i \in \mathbb{R}$, $C_i > 0$, we have that
\begin{align}
& f_i(x',\bar{v}_i') & \leq f_i(x, & u_i^*(x)) + C_i ||x - x'|| + C_i ||u_i^*(x) - \bar{v}_i'|| \nonumber \\
& & \leq f_i(x, & u_i^*(x)) + C_i (1+L_i) ||x - x'||, \label{eq:hausdorff_pf8}
\end{align}
where the last inequality follows in view of \eqref{eq:hausdorff_pf5}. Since $\bar{v}_i' \in U_i(x')$ and since $u_i^*(x')$ minimizes $f_i(x',\cdot)$ over $U_i(x')$, \eqref{eq:hausdorff_pf8} yields
\begin{align}
f_i(x',u_i^*(x')) \leq f_i(x,u_i^*(x))+ C_i (L_i + 1) ||x - x'||. \label{eq:hausdorff_pf9}
\end{align}

Similarly, by the Lipschitz continuity of $f_i(\cdot,\cdot)$ and by using  \eqref{eq:hausdorff_pf7}, we have that
\begin{align}
& f_i(x,\bar{v}_i) & \leq f_i(x', & u_i^*(x')) + C_i ||x - x'|| + C_i ||u_i^*(x') - \bar{v}_i|| \nonumber \\
& & \leq f_i(x', & u_i^*(x')) + C_i (1+L_i) ||x - x'||. \label{eq:hausdorff_pf10}
\end{align}
Since $\bar{v}_i \in U_i(x)$ and since $u_i^*(x)$ minimizes $f_i(x,\cdot)$ over $U_i(x)$, \eqref{eq:hausdorff_pf10} in turn gives that
\begin{align}
f_i(x,u_i^*(x)) \leq f_i(x',u_i^*(x'))+ C_i (L_i + 1) ||x - x'||. \label{eq:hausdorff_pf11}
\end{align}

Combining \eqref{eq:hausdorff_pf9} and \eqref{eq:hausdorff_pf11} we have that
\begin{align}
| f_i(x,u_i^*(x)) - f_i(x',u_i^*(x'))| \leq C_i (L_i + 1) ||x - x'||, \label{eq:hausdorff_pf12}
\end{align}
which is equivalent to
\begin{align}
| g_i(x) - g_i(x')| \leq C_i (L_i + 1) ||x - x'||, \label{eq:hausdorff_pf13}
\end{align}
being $g_i(x) = f_i(x,u_i^*(x))$ and $g_i(x') = f_i(x',u_i^*(x'))$.

Hence, $g_i(\cdot)$ is Lipschitz continuous on $X_i$ with Lipschitz constant $C_i (L_i + 1)$. This concludes the proof.
\end{proofof}


\end{document}